\documentclass[11pt]{amsart}


\issueinfo{00}{0}{Month}{Year}
\copyrightinfo{2008}{University of Calgary}
\pagespan{1}{2}
\PII{ISSN 1715-0868}



\usepackage{color}      
\usepackage{epsfig}
\usepackage{graphicx}           

\newtheorem{theorem}{Theorem}[section]
\newtheorem{corollary}[theorem]{Corollary}
\newtheorem{proposition}[theorem]{Proposition}
\newtheorem{observation}[theorem]{Observation}

\begin{document}

\title{Domination Value in Graphs}

\author{Eunjeong Yi}
\address{Texas A\&M University at Galveston, Galveston, TX 77553, USA}
\email{yie@tamug.edu}

\date{(date1), and in revised form (date2).}

\subjclass[2000]{05C69, 05C38}
\keywords{dominating set, domination value,
a local study of domination, Petersen graph, complete $n$-partite graphs, cycles, paths}

\thanks{}

\begin{abstract}
A set $D \subseteq V(G)$ is a \emph{dominating set} of $G$ if
every vertex not in $D$ is adjacent to at least one vertex in $D$.
A dominating set of $G$ of minimum cardinality is called a
$\gamma(G)$-set. For each vertex $v \in V(G)$, we define the
\emph{domination value} of $v$ to be the number of
$\gamma(G)$-sets to which $v$ belongs. In this paper, we study
some basic properties of the domination value function, thus
initiating \emph{a local study of domination} in graphs. Further,
we characterize domination value for the Petersen graph, complete
$n$-partite graphs, cycles, and paths.
\end{abstract}

\maketitle

\section{Introduction}

Let $G = (V(G),E(G))$ be a simple, undirected, and nontrivial
graph with order $|V(G)|$ and size $|E(G)|$. For $S \subseteq
V(G)$, we denote by $\langle S \rangle$ the subgraph of $G$ induced by $S$.
The \emph{degree of a vertex $v$} in $G$, denoted by $\deg_G(v)$,
is the number of edges that are incident to $v$ in $G$; an \emph{end-vertex} is a vertex of degree one, 
and a \emph{support vertex} is a vertex that is adjacent to an end-vertex. We denote by
$\Delta(G)$ \emph{the maximum degree} of a graph $G$. For a vertex
$v \in V(G)$, the \emph{open neighborhood} $N(v)$ of $v$ is the
set of all vertices adjacent to $v$ in $G$, and the \emph{closed
neighborhood}  $N[v]$ of $v$ is the set $N(v) \cup \{v\}$. A set $D
\subseteq V(G)$ is a \emph{dominating set} (DS) of $G$ if for each
$v \not\in D$ there exists a $u \in D$ such that $uv\in E(G)$. The
\emph{domination number} of $G$, denoted by $\gamma(G)$, is the
minimum cardinality of a DS in $G$; a DS of $G$ of minimum
cardinality is called a $\gamma(G)$-set. For earlier discussions
on domination in graphs, see \cite{B1, B2, EJ, Jaegar, Ore}. For a
survey of domination in graphs, refer to \cite{Dom1, Dom2}. We generally
follow \cite{CZ} for notation and graph theory terminology.
Throughout the paper, we denote by $P_n$, $C_n$, and $K_n$ the path,
the cycle, and the complete graph on $n$ vertices, respectively.\\

In \cite{Slater}, Slater introduced the notion of the number of dominating sets of $G$, which he denoted by HED$(G)$ in honor of Steve Hedetniemi; further, he also 
used \#$\gamma(G)$ to denote the number of $\gamma(G)$-sets. In this paper, we will use $\tau(G)$ to denote the total number of $\gamma(G)$-sets, and
by $DM(G)$ the collection of all $\gamma(G)$-sets. For each vertex
$v \in V(G)$, we define the \emph{domination value} of $v$,
denoted by $DV_G(v)$, to be the number of $\gamma(G)$-sets to
which $v$ belongs; we often drop $G$ when ambiguity is not a
concern. See \cite{CK} for a discussion on \emph{total domination value} in graphs. For a further work on domination value in graphs, see \cite{Yi}. In this paper, we study some basic
properties of the domination value function, thus initiating a
\emph{local study} of domination in graphs. When a real-world
situation can be modeled by a graph, the locations (vertices)
with high domination values are of interest. One can use
domination value in selecting locations for fire departments or
convenience stores, for example. Though numerous papers on
domination have been published, no prior systematic local study of
domination is known. However, in \cite{pruned tree}, Mynhardt
characterized the vertices in a tree $T$ whose domination value is $0$ or $\tau(T)$. 
It should be noted that finding domination value of any given vertex in 
a given graph $G$ can be an extremely difficulty task, given the difficulty attendant to finding $\tau(G)$ or just $\gamma(G)$.\\


\section{Basic properties of domination value: upper and lower bounds}

In this section, we consider the lower and upper bounds of the
domination value function for a fixed vertex $v_0$ and for $v \in
N[v_0]$. Clearly, $0 \le DV_G(v) \le \tau(G)$ for any graph $G$
and for any vertex $v \in V(G)$. We will say the bound is sharp if
equality is obtained for a graph of some order in an inequality.
We first make the following observations.\\

\begin{observation}\label{observation}
$\displaystyle \sum_{v \in V(G)} DV_G(v) = \tau(G) \cdot
\gamma(G)$
\end{observation}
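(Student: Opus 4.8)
The plan is to prove this identity by a double-counting argument applied to the set of incidences between vertices and minimum dominating sets. Specifically, I would count in two different ways the cardinality of the incidence set
\[
\mathcal{I} = \{(v,D) : D \in DM(G) \text{ and } v \in D\},
\]
where $DM(G)$ is the collection of all $\gamma(G)$-sets. Since both sides of the claimed equation are nonnegative integers, establishing that each equals $|\mathcal{I}|$ suffices.

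First I would count $|\mathcal{I}|$ by grouping the pairs according to their first coordinate. For a fixed vertex $v \in V(G)$, the pairs $(v,D) \in \mathcal{I}$ are in bijection with the $\gamma(G)$-sets containing $v$, and the number of these is precisely $DV_G(v)$ by the definition of the domination value. Summing over all vertices then gives $|\mathcal{I}| = \sum_{v \in V(G)} DV_G(v)$, which is the left-hand side.

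Next I would count $|\mathcal{I}|$ by grouping the pairs according to their second coordinate. For a fixed $D \in DM(G)$, the pairs $(v,D) \in \mathcal{I}$ are exactly the pairs with $v$ ranging over the elements of $D$, so there are $|D|$ of them. Here I use the key structural fact that every member of $DM(G)$ is a \emph{minimum} dominating set, so $|D| = \gamma(G)$ for each such $D$; since there are $\tau(G)$ sets in $DM(G)$, this yields $|\mathcal{I}| = \tau(G)\cdot\gamma(G)$, the right-hand side. Equating the two counts completes the proof.

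I do not expect any genuine obstacle here, as the result is an elementary incidence identity. The only point that warrants a moment's attention is the uniformity of cardinalities used in the second count, namely that every $\gamma(G)$-set has exactly $\gamma(G)$ vertices; this is immediate from the definition of $DM(G)$, but it is the one place where the argument would break if one instead summed over arbitrary dominating sets of varying sizes.
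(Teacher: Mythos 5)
Your double-counting argument is correct and complete; it is precisely the standard incidence count that the paper leaves implicit by stating this as an Observation without proof. Your attention to the fact that every set in $DM(G)$ has cardinality exactly $\gamma(G)$ is exactly the point that makes the identity hold, so nothing is missing.
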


\begin{observation}\label{observation1}
If there is an isomorphism of graphs carrying a vertex $v$ in $G$
to a vertex $v'$ in $G'$, then $DV_G(v)=DV_{G'}(v')$.
\end{observation}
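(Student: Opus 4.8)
The plan is to show that an isomorphism induces a bijection between the two collections of $\gamma$-sets that respects membership of the distinguished vertices. Let $\phi : V(G) \to V(G')$ be the vertex bijection underlying the given isomorphism, so that $\phi(v) = v'$ and $uw \in E(G)$ if and only if $\phi(u)\phi(w) \in E(G')$.

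First I would verify that $\phi$ carries dominating sets to dominating sets. If $D \subseteq V(G)$ is a dominating set of $G$ and $y \in V(G') \setminus \phi(D)$, then $y = \phi(x)$ for a unique $x \notin D$; choosing $u \in D$ with $ux \in E(G)$ yields $\phi(u)\phi(x) = \phi(u)y \in E(G')$ with $\phi(u) \in \phi(D)$, so $\phi(D)$ dominates $G'$. Since $|\phi(D)| = |D|$ and the same argument applies to the inverse isomorphism $\phi^{-1}$, we obtain $\gamma(G) = \gamma(G')$ and conclude that $D \in DM(G)$ if and only if $\phi(D) \in DM(G')$. Thus $D \mapsto \phi(D)$ is a bijection $DM(G) \to DM(G')$ with inverse $D' \mapsto \phi^{-1}(D')$.

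Next I would track the distinguished vertices through this bijection. Because $\phi$ is injective, for every $D \in DM(G)$ we have $v \in D$ exactly when $v' = \phi(v) \in \phi(D)$. Hence $D \mapsto \phi(D)$ restricts to a bijection between $\{D \in DM(G) : v \in D\}$ and $\{D' \in DM(G') : v' \in D'\}$, and counting both sides gives $DV_G(v) = DV_{G'}(v')$, as desired.

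There is no genuine obstacle here: the only point requiring any care is that the set map induced by $\phi$ preserves the domination property in both directions, which is precisely where the adjacency-preserving nature of an isomorphism is used. Everything else is routine bookkeeping about bijections and their restrictions to preimage-defined subfamilies.
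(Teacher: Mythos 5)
Your proof is correct: the paper states this as an \emph{Observation} with no proof at all, and your argument (the isomorphism induces a bijection $DM(G)\to DM(G')$ that restricts to a bijection between the $\gamma$-sets containing $v$ and those containing $v'$) is exactly the routine verification the paper leaves implicit. Nothing is missing and no further comparison is needed.
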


Examples of graphs that admit automorphisms are cycles, paths, and
the Petersen graph. The Pertersen graph, which is often used as a
counter-example for conjectures, is vertex-transitive (p.27,
\cite{Petersen}). Let $\mathcal{P}$ denote the Petersen graph with
labeling as in Figure \ref{P}.

\begin{figure}[htbp]
\begin{center}
\scalebox{0.5}{\input{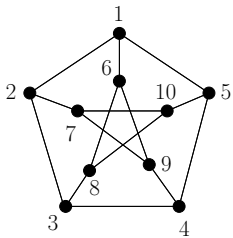}} \caption{The Petersen
graph}\label{P}
\end{center}
\end{figure}

It's easy to check that $\gamma(\mathcal{P})=3$. We will show that
$DV(v)=3$ for each $v \in V(\mathcal{P})$. Since $\mathcal{P}$ is
vertex-transitive, it suffices to compute $DV_{\mathcal{P}}(1)$.
For the $\gamma(\mathcal{P})$-set $\Gamma$ containing the vertex
$1$, one can easily check that no vertex in $N(1)$ belongs to
$\Gamma$. Further, notice that no three vertices from
$\{1,2,3,4,5\}$ form a $\gamma(\mathcal{P})$-set. Keeping these
two conditions in mind, one can readily verify that the
$\gamma(\mathcal{P})$-sets containing the vertex $1$ are
$\{1,3,7\}$, $\{1,4,10\}$, and $\{1,8,9\}$, and thus
$DV(1)=3=DV(v)$ for each $v \in V(\mathcal{P})$.

\begin{observation}\label{observation2}
Let $G$ be the disjoint union of two graphs $G_1$ and $G_2$. Then
$\gamma(G)=\gamma(G_1)+ \gamma(G_2)$ and $\tau(G)=\tau(G_1) \cdot
\tau(G_2)$. For $v \in V(G_1)$, $DV_G(v)=DV_{G_1}(v) \cdot
\tau(G_2)$.
\end{observation}

\begin{proposition}\label{upperbound1}
For a fixed $v_0 \in V(G)$, we have
$$\tau(G) \le \sum_{v \in N[v_0]} DV_G(v) \le \tau(G) \cdot \gamma(G),$$ and both bounds are
sharp.
\end{proposition}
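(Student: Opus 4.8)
The plan is to rewrite the quantity $\sum_{v \in N[v_0]} DV_G(v)$ by exchanging the order of summation, turning a sum over vertices into a sum over the $\gamma(G)$-sets themselves. Since $DV_G(v)$ counts the members of $DM(G)$ that contain $v$, I would first establish the identity
$$\sum_{v \in N[v_0]} DV_G(v) = \sum_{D \in DM(G)} \left| D \cap N[v_0] \right|,$$
which follows by counting, in two ways, the pairs $(v,D)$ with $v \in N[v_0]$, $D \in DM(G)$, and $v \in D$. Both bounds then reduce to estimating the single quantity $|D \cap N[v_0]|$ for an arbitrary $\gamma(G)$-set $D$, so the whole argument hinges on this reformulation.

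For the lower bound, the key point is that every $\gamma(G)$-set must dominate $v_0$: either $v_0 \in D$, or some neighbor $u$ of $v_0$ lies in $D$. In either case $D \cap N[v_0] \neq \emptyset$, so $|D \cap N[v_0]| \ge 1$; summing over the $\tau(G)$ sets in $DM(G)$ gives $\sum_{v \in N[v_0]} DV_G(v) \ge \tau(G)$. For the upper bound, since $D \cap N[v_0] \subseteq D$ and $|D| = \gamma(G)$ for every $D \in DM(G)$, we have $|D \cap N[v_0]| \le \gamma(G)$, and summing over all $\tau(G)$ sets yields $\tau(G)\cdot\gamma(G)$. Both inequalities are thus immediate once the identity above is in place.

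The remaining task, and the part requiring the most care, is exhibiting graphs that force equality. The lower bound is attained exactly when every $\gamma(G)$-set meets $N[v_0]$ in precisely one vertex; I would realize this with a graph in which $v_0$ lies in a $K_2$-component, for instance $G = nK_2$ with $v_0$ an end of one edge, where Observation \ref{observation2} makes the count transparent and each $\gamma(G)$-set contributes exactly $1$. The upper bound is attained exactly when every $\gamma(G)$-set is contained in $N[v_0]$, equivalently (via Observation \ref{observation}) when $DV_G(v)=0$ for all $v \notin N[v_0]$; the complete graph $K_n$ with any chosen $v_0$ achieves this at once, since $N[v_0]=V(K_n)$. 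I expect the main obstacle to be not the inequalities but the verification of extremal examples, and in particular identifying a family that realizes the upper bound without collapsing to the trivial case $\gamma(G)=1$; a double star (two adjacent centers, each carrying two leaves) has a unique $\gamma(G)$-set consisting of its two centers, which lies inside $N[v_0]$ when $v_0$ is taken to be a center, and this serves the purpose with $\gamma(G)=2$.
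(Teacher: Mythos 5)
Your proposal is correct, and the core of it — the inequalities — follows the same route as the paper: your double-counting identity $\sum_{v \in N[v_0]} DV_G(v) = \sum_{D \in DM(G)} |D \cap N[v_0]|$ is just an explicit form of what the paper does, with the lower bound coming from the fact that every $\gamma(G)$-set must meet $N[v_0]$ (else $v_0$ is undominated), and the upper bound being a restriction of Observation \ref{observation} (you bound $|D \cap N[v_0]| \le \gamma(G)$ per set; the paper drops the nonnegative terms outside $N[v_0]$ — the two are equivalent). Where you genuinely diverge is in the sharpness examples. The paper takes $v_0$ to be an end-vertex of $P_{3k}$ for the lower bound (leaning on the later Theorem \ref{theorem on paths} and Corollary \ref{path on 3k}, so it is a forward reference) and the center of the spider in Figure \ref{figure1}(A) — three legs of length two, unique $\gamma$-set of size $3$ lying in $N(v_0)$ — for the upper bound. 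You instead use $mK_2$ with $v_0$ in one component (every $\gamma$-set picks exactly one vertex per component, so it meets $N[v_0]$ exactly once, giving equality below via Observation \ref{observation2}), and for the upper bound either $K_n$ (trivially, $N[v_0]=V(K_n)$) or the double star, whose unique $\gamma$-set consists of the two centers and is contained in $N[v_0]$ when $v_0$ is a center, giving equality with $\gamma = 2$. All of your examples check out, and they have the modest advantage of being self-contained — verifiable on the spot rather than resting on results proved later in the paper — while the paper's path example ties the proposition to its classification of domination values on paths. Your characterizations of when each bound is attained (every $\gamma(G)$-set meets $N[v_0]$ in exactly one vertex, resp.\ every $\gamma(G)$-set is contained in $N[v_0]$) are also correct and slightly sharpen what the paper states.
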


\begin{proof}
The upper bound follows from Observation \ref{observation}. For
the lower bound, note that every $\gamma(G)$-set $\Gamma$
must contain a vertex in $N[v_0]$: otherwise $\Gamma$ fails to dominate $v_0$.\\

For sharpness of the lower bound, take $v_0$ to be an end-vertex
of $P_{3k}$ for $k \ge 1$ (see Theorem \ref{theorem on paths} and
Corollary \ref{path on 3k}). For sharpness of the upper bound,
take as $v_0$ the central vertex of (A) in Figure \ref{figure1}. \hfill
\end{proof}

\begin{proposition} \label{upperbound2}
For any $v_0 \in V(G)$, $$\sum_{v \in N[v_0]} DV_G(v) \le \tau(G)
\cdot (1+ \deg_G(v_0)),$$ and the bound is sharp.
\end{proposition}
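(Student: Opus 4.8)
The plan is to bound the sum $\sum_{v \in N[v_0]} DV_G(v)$ by counting, over all $\gamma(G)$-sets $\Gamma$, the number of vertices of $N[v_0]$ that $\Gamma$ contains. Interchanging the order of summation, I would write
\[
\sum_{v \in N[v_0]} DV_G(v) = \sum_{\Gamma \in DM(G)} |\Gamma \cap N[v_0]|,
\]
since $DV_G(v)$ counts the $\gamma(G)$-sets containing $v$. This reduces the problem to bounding $|\Gamma \cap N[v_0]|$ for a single $\gamma(G)$-set $\Gamma$.

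The key observation is that $N[v_0]$ has exactly $1 + \deg_G(v_0)$ vertices, namely $v_0$ together with its $\deg_G(v_0)$ neighbors. Hence trivially $|\Gamma \cap N[v_0]| \le |N[v_0]| = 1 + \deg_G(v_0)$ for every $\gamma(G)$-set $\Gamma$. Summing this bound over all $\tau(G)$ sets in $DM(G)$ immediately yields
\[
\sum_{v \in N[v_0]} DV_G(v) = \sum_{\Gamma \in DM(G)} |\Gamma \cap N[v_0]| \le \tau(G) \cdot (1 + \deg_G(v_0)),
\]
as claimed. This is the entire argument for the inequality; it is genuinely routine once the summation-interchange is set up, and no case analysis is needed.

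The only remaining work is sharpness, which I expect to be the main (though still mild) obstacle, since it requires exhibiting a concrete graph and vertex for which every $\gamma(G)$-set contains all of $N[v_0]$. A natural candidate is a star $K_{1,n}$ (or a small cluster of stars, possibly building on the example (A) in Figure \ref{figure1} already used for Proposition \ref{upperbound1}), where taking $v_0$ to be the center forces $N[v_0] = V(G)$ in the single-star case; more generally one wants a graph where the unique $\gamma(G)$-set, or every $\gamma(G)$-set, happens to lie entirely within $N[v_0]$, making $|\Gamma \cap N[v_0]| = 1 + \deg_G(v_0)$ for each such $\Gamma$. I would verify that such an example attains equality and cite the appropriate figure or complete-$n$-partite computation developed later in the paper, keeping in mind that the bound of Proposition \ref{upperbound2} may differ from, and in some graphs improve upon, the cruder upper bound $\tau(G)\cdot\gamma(G)$ of the previous proposition.
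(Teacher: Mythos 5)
Your proof of the inequality itself is correct and is in substance the same as the paper's: the paper bounds each of the $1+\deg_G(v_0)$ summands by $DV_G(v)\le\tau(G)$, while you interchange the order of summation and bound each of the $\tau(G)$ summands $|\Gamma\cap N[v_0]|$ by $|N[v_0]|$. These are the two readings of the same incidence count, so nothing is gained or lost either way; both are one-line arguments.

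The genuine gap is in the sharpness part, which is half of the statement. Your candidate example fails: in the star $K_{1,n}$ ($n\ge 1$) with $v_0$ the center, the $\gamma$-sets are $\{v_0\}$ alone when $n\ge 2$ (and $\{1\},\{2\}$ when $n=1$), so $\sum_{v\in N[v_0]}DV(v)=1$ (resp.\ $2$) while the claimed bound is $\tau(G)\cdot(1+\deg_G(v_0))=n+1$ (resp.\ $4$). The underlying error is that you inverted the equality condition: by your own double count, equality requires $|\Gamma\cap N[v_0]|=|N[v_0]|$ for \emph{every} $\gamma(G)$-set $\Gamma$, i.e.\ $N[v_0]\subseteq\Gamma$ for every $\Gamma$, not $\Gamma\subseteq N[v_0]$ as you wrote. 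Moreover, this condition is unattainable whenever $\deg_G(v_0)\ge 1$: if a minimum dominating set $\Gamma$ contained all of $N[v_0]$, then no vertex outside $\Gamma$ would be adjacent to $v_0$ (all neighbors of $v_0$ lie in $\Gamma$), and $v_0$ itself would be dominated by any of its neighbors in $\Gamma$, so $\Gamma\setminus\{v_0\}$ would be a smaller dominating set, a contradiction. Hence sharpness can only occur with $\deg_G(v_0)=0$, which is exactly the paper's example: take $v_0$ to be an isolated vertex of a graph $G$; then $N[v_0]=\{v_0\}$, every $\gamma(G)$-set contains $v_0$ (Observation \ref{observation2}), so $DV_G(v_0)=\tau(G)$ and equality holds. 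So the "main obstacle" you deferred is not mild bookkeeping — it requires an example of a qualitatively different shape than the one you proposed.
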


\begin{proof}
For each $v\in N[v_0]$, $DV_G(v)\leq \tau(G)$ and $|N[v_0]|=1+ \deg_G(v_0)$. Thus,\\
$$\sum_{v\in N[v_0]}DV(v) \leq \sum_{v\in N[v_0]}\tau(G)=\tau(G)\!\!\!\sum_{v\in N[v_0]}1=\tau(G)(1+ \deg_G(v_0)).$$\\
The upper bound is achieved for a graph of order $n$ for any
$n\geq 1$. Let $G_n$ be a graph on $n$ vertices containing an
isolated vertex. To see the sharpness of the upper bound, take as $v_0$
one of the isolates vertices, then the upper bound follows by
Observation \ref{observation2} and $\deg_G(v_0)=0$. \hfill
\end{proof}

\begin{figure}[htbp]
\begin{center}
\scalebox{0.5}{\input{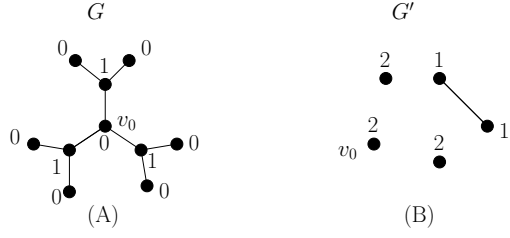}} \caption{Examples of
local domination values and their upper bounds}\label{figure1}
\end{center}
\end{figure}

We will compare two examples, where each example attains the upper
bound of Proposition \ref{upperbound1} or Proposition
\ref{upperbound2}, but not both. Let $v_0$ be the central vertex
of degree $3$, which is not a support vertex as in the graph (A)
of Figure \ref{figure1}. Then $\sum_{v \in N[v_0]} DV_G(v)=3$.
Note that $\tau (G) =1$, $\gamma (G)=3$, and $\deg_G(v_0)=3$.
Proposition \ref{upperbound1} yields the upper bound $\tau (G)
\cdot \gamma (G)=1\cdot 3=3$, which is sharp. But, the upper bound
provided by Proposition \ref{upperbound2}
is $\tau (G) \cdot (1+\deg_G(v_0))=1 \cdot (1+3)=4$, which is not sharp in this case.\\

Now, let $v_0$ be an isolated vertex as labeled in the graph (B)
of Figure \ref{figure1}. Then $\sum_{v \in N[v_0]}DV_{G'}(v)=2$.
Note that $\tau (G')=2$, $\gamma(G')=4$, and $\deg_{G'}(v_0)=0$.
Proposition \ref{upperbound2} yields the upper bound $\tau (G')
\cdot (1+ \deg_{G'}(v_0))=2 \cdot (1+0)=2$, which is sharp. But,
the upper bound provided by Proposition \ref{upperbound1} is
$\tau (G') \cdot \gamma (G')=2 \cdot 4=8$, which is not sharp in this case.\\

\begin{proposition}\label{subgraph-tau}
Let $H$ be a subgraph of $G$ with $V(H)=V(G)$. If
$\gamma(H)=\gamma(G)$, then $\tau(H) \le \tau(G)$.
\end{proposition}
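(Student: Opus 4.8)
The plan is to exploit the containment of edge sets. Since $H$ is a subgraph of $G$ with $V(H)=V(G)$, we have $E(H)\subseteq E(G)$; that is, $H$ is a spanning subgraph of $G$ obtained by deleting some edges (and keeping all vertices). The first step I would establish is a monotonicity principle for dominating sets under edge addition: if $D$ is a dominating set of $H$, then $D$ is also a dominating set of $G$. Indeed, for any $v\notin D$ there is some $u\in D$ with $uv\in E(H)$, and since $E(H)\subseteq E(G)$ we also have $uv\in E(G)$, so $D$ dominates $v$ in $G$ as well. Consequently every DS of $H$ is a DS of $G$; in particular, applying this to a $\gamma(H)$-set shows $\gamma(G)\le\gamma(H)$, so the equality $\gamma(H)=\gamma(G)$ in the hypothesis is the borderline case of this inequality.

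Next I would bring in the hypothesis $\gamma(H)=\gamma(G)$. Let $D\in DM(H)$ be an arbitrary $\gamma(H)$-set, so $|D|=\gamma(H)$. By the previous step $D$ is a dominating set of $G$, and $|D|=\gamma(H)=\gamma(G)$, so $D$ is a dominating set of $G$ of minimum cardinality; that is, $D\in DM(G)$. This is exactly the point at which the hypothesis is used: the first step alone only guarantees that a $\gamma(H)$-set dominates $G$, and without $\gamma(H)=\gamma(G)$ such a set need not be minimum for $G$ and hence need not lie in $DM(G)$.

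Combining the two steps gives the set inclusion $DM(H)\subseteq DM(G)$, and passing to cardinalities yields $\tau(H)=|DM(H)|\le|DM(G)|=\tau(G)$, as desired. I do not anticipate a genuine obstacle in this argument; the only subtlety worth flagging is that one must remember to invoke the equality $\gamma(H)=\gamma(G)$ to upgrade the statement ``$D$ dominates $G$'' to ``$D$ is a $\gamma(G)$-set,'' since that single hypothesis is precisely what makes the inclusion $DM(H)\subseteq DM(G)$ hold.
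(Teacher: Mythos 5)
Your proof is correct and follows essentially the same route as the paper's: both arguments observe that every dominating set of the spanning subgraph $H$ is a dominating set of $G$, then use the hypothesis $\gamma(H)=\gamma(G)$ to conclude that every $\gamma(H)$-set is in fact a $\gamma(G)$-set, giving the inclusion $DM(H)\subseteq DM(G)$ and hence $\tau(H)\le\tau(G)$. Your write-up simply spells out in detail what the paper states in two sentences.
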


\begin{proof}
By the first assumption, every DS for $H$ is a DS for $G$. By
$\gamma(H)=\gamma(G)$, it's guaranteed that every DS of minimum
cardinality for $H$ is also a DS \emph{of minimum cardinality for $G$}.
\end{proof}

The \emph{complement} $\overline{G}=(V(\overline{G}),
E(\overline{G}))$ of a graph $G$ is the graph such that
$V(\overline{G})=V(G)$ and $uv \in E(\overline{G})$ if and only if
$uv \not\in E(G)$. We recall the following

\begin{theorem}
Let $G$ be any graph of order $n$. Then
\begin{itemize}
\item[(i)] (\cite{Jaegar}, Jaegar and Payan)
$\gamma(G)+\gamma(\overline{G}) \le n+1$; and
\item[(ii)](\cite{B1}, p.304) $\gamma(G) \le n- \Delta(G)$.
\end{itemize}
\end{theorem}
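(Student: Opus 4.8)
The plan is to establish (ii) first, since it serves as the main tool for (i). For (ii), I would fix a vertex $v$ with $\deg_G(v) = \Delta(G)$ and consider the set $D = V(G) \setminus N(v)$, which has cardinality $n - \Delta(G)$. Because the graph is simple, $v \notin N(v)$, so $v \in D$; moreover every vertex outside $D$ lies in $N(v)$ and is therefore adjacent to $v \in D$. Hence $D$ is a dominating set of $G$, and $\gamma(G) \le |D| = n - \Delta(G)$, which is exactly (ii).

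For (i), I would apply (ii) to both $G$ and its complement. Directly, $\gamma(G) \le n - \Delta(G)$. The key observation is that degrees complement additively: for every vertex $u$ we have $\deg_{\overline{G}}(u) = n - 1 - \deg_G(u)$, so that $\Delta(\overline{G}) = n - 1 - \delta(G)$, where $\delta(G)$ denotes the minimum degree of $G$. Applying (ii) to $\overline{G}$ then gives $\gamma(\overline{G}) \le n - \Delta(\overline{G}) = 1 + \delta(G)$.

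Adding the two inequalities yields
$$\gamma(G) + \gamma(\overline{G}) \le (n - \Delta(G)) + (1 + \delta(G)) = n + 1 - (\Delta(G) - \delta(G)).$$
Since $\delta(G) \le \Delta(G)$ always holds, the parenthetical term is nonnegative, and the bound $\gamma(G) + \gamma(\overline{G}) \le n + 1$ follows at once.

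The argument is short, and I expect the only point requiring genuine care to be the complementary-degree identity $\Delta(\overline{G}) = n - 1 - \delta(G)$: one must remember that the \emph{maximum} degree in $\overline{G}$ is realized by the vertex of \emph{minimum} degree in $G$ (an order reversal), not by the maximum-degree vertex of $G$. Everything else is routine, and no case analysis (for instance on whether $G$ is connected or has isolated vertices) is needed; the edgeless graph, for which equality $\gamma(G) + \gamma(\overline{G}) = n + 1$ holds, is already covered uniformly by the same computation.
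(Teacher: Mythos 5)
Your proof is correct, but note that the paper itself offers no proof of this theorem to compare against: it is stated as a recalled result, with part (i) attributed to Jaegar and Payan and part (ii) to Berge's book, and the paper simply cites those sources. Your argument is a sound, self-contained replacement. For (ii), the set $D = V(G) \setminus N(v)$, where $v$ has maximum degree, is indeed a dominating set of cardinality $n - \Delta(G)$, since $v \in D$ dominates every vertex of $N(v)$ and all other vertices lie in $D$ themselves. For (i), applying (ii) to both $G$ and $\overline{G}$ together with the identity $\Delta(\overline{G}) = n - 1 - \delta(G)$ gives $\gamma(G) + \gamma(\overline{G}) \le n + 1 - \bigl(\Delta(G) - \delta(G)\bigr) \le n + 1$, which is valid. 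The structural merit of your route is the economy of deriving the Nordhaus--Gaddum-type bound (i) as a near-immediate corollary of the elementary degree bound (ii), rather than treating the two as independent facts; and the point you flag --- that the maximum degree of $\overline{G}$ is realized by a \emph{minimum}-degree vertex of $G$, so the slack in the final inequality is exactly $\Delta(G) - \delta(G) \ge 0$ --- is precisely the step where a careless version of this argument would fail, so you have identified the right pressure point.
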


\begin{proposition}
Let $G$ be a graph on $n=2m \ge 4$ vertices. If $G$ or
$\overline{G}$ is $mK_2$, then
$$DV_G(v)+DV_{\overline{G}}(v)=n-1+2^{\frac{n}{2}-1}.$$
\end{proposition}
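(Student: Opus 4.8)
The plan is to reduce to a single case and then compute the two domination values directly. Since $\overline{\overline{G}}=G$, the quantity $DV_G(v)+DV_{\overline{G}}(v)$ is symmetric under interchanging $G$ and $\overline{G}$, so I may assume without loss of generality that $G=mK_2$ and $\overline{G}=\overline{mK_2}$; the remaining case is then immediate.

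First I would handle $G=mK_2$ using Observation \ref{observation2}. Writing $mK_2$ as the disjoint union of $m$ copies of $K_2$, and noting that for a single edge $\gamma(K_2)=1$, $\tau(K_2)=2$, and each endpoint lies in exactly one $\gamma(K_2)$-set (so $DV_{K_2}=1$), an iterated application of Observation \ref{observation2} gives $\gamma(mK_2)=m$, $\tau(mK_2)=2^m$, and for every vertex $v$,
$$DV_{mK_2}(v)=DV_{K_2}(v)\cdot \tau((m-1)K_2)=1\cdot 2^{m-1}=2^{\frac{n}{2}-1}.$$

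Next I would analyze $\overline{mK_2}$. Labeling the matched pairs of $mK_2$ as $\{a_i,b_i\}$ for $1\le i\le m$, two vertices are adjacent in $\overline{mK_2}$ precisely when they lie in distinct pairs; thus $\overline{mK_2}$ is the complete multipartite graph $K_{2,2,\dots,2}$ on $m$ parts of size $2$. A single vertex fails to dominate only its matched partner, so $\gamma(\overline{mK_2})\ge 2$. The key step is to verify that every $2$-element subset $\{x,y\}$ is a dominating set: if $x,y$ lie in different parts then each dominates all vertices outside its own part, and the only vertex each could miss is its partner, which is dominated by the other; if $\{x,y\}$ is a whole part, then together they are adjacent to every remaining vertex. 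Hence $\gamma(\overline{mK_2})=2$ and \emph{every} one of the $\binom{n}{2}$ pairs is a $\gamma$-set, so $\tau(\overline{mK_2})=\binom{n}{2}$. A fixed vertex $v$ lies in exactly those pairs containing it, namely $n-1$ of them, whence $DV_{\overline{mK_2}}(v)=n-1$.

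Adding the two contributions yields $DV_G(v)+DV_{\overline{G}}(v)=2^{\frac{n}{2}-1}+(n-1)$, the desired identity. The only point beyond routine bookkeeping is the claim that every pair dominates $\overline{mK_2}$; I expect this short same-part/different-part case check to be the main (and only mild) obstacle, with everything else following directly from Observation \ref{observation2} and a count of $2$-subsets.
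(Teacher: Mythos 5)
Your proof is correct and follows essentially the same route as the paper: reduce by complementation symmetry to $G=mK_2$, compute $DV_{mK_2}(v)=2^{m-1}$ via the disjoint-union observation, and observe that every pair of vertices dominates $\overline{mK_2}$, giving $DV_{\overline{mK_2}}(v)=n-1$. Your extra identification of $\overline{mK_2}$ as $K_{2,2,\dots,2}$ and the count $\tau(\overline{mK_2})=\binom{n}{2}$ are harmless additions; the paper instead invokes vertex-transitivity to pass from the vertex labeled $1$ to all vertices.
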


\begin{proof}
Without loss of generality, assume $G=mK_2$ and label the vertices
of $G$ by $1,2,\ldots,2m$. Further assume that the vertex $2k-1$
is adjacent to the vertex $2k$, where $1 \le k \le m$. Then
$DV_G(1)=2^{m-1}$, which consists of the vertex 1 and one vertex
from each path $K_2$. By Observation \ref{observation1} and
Observation \ref{observation2}, $DV_G(v)=2^{m-1}=2^{\frac{n}{2}-1}$ for any $v\in V(G)$.\\

Now, consider $\overline{G}$ and the vertex labeled $1$ for ease
of notation. Since $\Delta(G)=n-2$, $\gamma(\overline{G})>1$.
Noting that $\{1, \alpha\}$ as $\alpha$ ranges from 2 to $2m$
enumerates all dominating sets of $\overline{G}$ containing the
vertex 1, we have $\gamma(\overline{G})=2$ and
$DV_{\overline{G}}(1)=n-1$. By Observation \ref{observation1},
$DV_{\overline{G}}(v)=n-1$ holds for any $v \in V(\overline{G})$.
Therefore, $DV_G(v)+DV_{\overline{G}}(v)=n-1+2^{\frac{n}{2}-1}$. \hfill
\end{proof}

Next we consider domination value of a graph $G$ when $\Delta(G)$ is
given.

\begin{observation} \label{degree n-1}
Let $G$ be a graph of order $n \ge 2$ such that $\Delta (G)=n-1$. Then
$\gamma(G)=1$ and $DV(v) \le 1$ for any $v \in V(G)$. Equality
holds if and only if $\deg_G(v)=n-1$.
\end{observation}

\begin{proposition}\label{degree n-2}
Let $G$ be a graph of order $n \ge 3$ such that $\Delta(G)=n-2$. Then
$\gamma(G)=2$ and $DV(v) \leq n-1$ for any $v \in V(G)$. Further,
if $\deg(v)=n-2$, then $DV(v)=|N[w]|$ where $vw \notin E(G)$.
\end{proposition}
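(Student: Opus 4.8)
The plan is to establish the three claims in order, leaning on the rigid structure imposed by a vertex of degree $n-2$. First I would pin down $\gamma(G)=2$. Since $\Delta(G)=n-2<n-1$, no single vertex dominates all of $G$, so $\gamma(G)\ge 2$. For the reverse inequality, fix a vertex $u$ with $\deg_G(u)=n-2$ and let $w$ be its unique non-neighbor, so that $N[u]=V(G)\setminus\{w\}$. Then for any $x\in N[w]$ the pair $\{u,x\}$ dominates $G$: $u$ covers everything except $w$, while $x$ covers $w$. Hence $\gamma(G)\le 2$, giving $\gamma(G)=2$.

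Next I would bound $DV(v)$. Having shown $\gamma(G)=2$, every $\gamma(G)$-set containing $v$ has the form $\{v,x\}$ with $x\ne v$; there are at most $n-1$ such $x$, so $DV(v)\le n-1$ for every $v\in V(G)$.

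For the exact value when $\deg_G(v)=n-2$, I would identify precisely which pairs $\{v,x\}$ are dominating sets. Let $w$ be the unique vertex with $vw\notin E(G)$, so that $N[v]=V(G)\setminus\{w\}$. Since $v$ already dominates every vertex except $w$, the set $\{v,x\}$ dominates $G$ if and only if $x$ dominates $w$, i.e.\ $x\in N[w]$; conversely each such $x$ yields a distinct dominating pair of size $2$. The key observation making the count clean is that $v\notin N[w]$ (because $vw\notin E(G)$ and $v\ne w$), so every element of $N[w]$ is a legitimate partner $x\ne v$. Therefore the $\gamma(G)$-sets containing $v$ are exactly $\{\,\{v,x\}:x\in N[w]\,\}$, whence $DV(v)=|N[w]|$.

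The argument is largely bookkeeping; the one place to be careful is the ``if and only if'' characterization of dominating pairs, where one must verify both that $x\notin N[w]$ leaves $w$ undominated (since $v$ cannot help, as $vw\notin E(G)$) and that $v$ itself is excluded from $N[w]$, so that no pair is double-counted or omitted.
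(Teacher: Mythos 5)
Your proof is correct and follows essentially the same route as the paper: establish $\gamma(G)=2$ via a pair containing the degree-$(n-2)$ vertex and its unique non-neighbor $w$, then count the $\gamma(G)$-sets containing $v$ as exactly the pairs $\{v,x\}$ with $x\in N[w]$. If anything, you are slightly more thorough, since you explicitly justify the bound $DV(v)\le n-1$ for arbitrary $v$, which the paper leaves implicit.
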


\begin{proof}
Let $\deg_G(v)=\Delta(G)=n-2$, then $\gamma(G)>1$ and there's only
one vertex $w$ such that $vw \notin E(G)$. Clearly, $\{v, w\}$ is
a $\gamma(G)$-set; so $\gamma(G)=2$. Noticing that $v$ dominates
$N[v]$, we see that the number of $\gamma(G)$-sets containing $v$
is $|N[w]|$; i.e., $DV(v)=|N[w]| \le n-2$. \hfill
\end{proof}

\begin{theorem}\label{theorem on Delta(G)=n-3}
Let $G$ be a graph of order $n \ge 4$ and $\Delta (G)=n-3$. Fix a
vertex $v$ with $\deg_G(v)= \Delta(G)$.
\begin{itemize}
\item [(i)] If $G$ is disconnected, then $\gamma(G)=2$ with
$DV(v)=2$ or $\gamma(G)=3$ with $DV(v)\le n-3$. \item [(ii)] If
$G$ is connected, then $\gamma(G)=2$ with $DV(v) \le n-2$ or
$\gamma(G)=3$ with $DV(v) \le (\frac{n-1}{2})^2$.
\end{itemize}
\end{theorem}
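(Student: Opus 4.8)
The plan is to first pin down $\gamma(G)$. Since $\Delta(G)=n-3<n-1$, no single vertex dominates $G$, so $\gamma(G)\ge 2$; and by the recalled bound $\gamma(G)\le n-\Delta(G)=3$. Hence $\gamma(G)\in\{2,3\}$ throughout, and the two alternatives in each item correspond exactly to these two values. Fix $v$ with $\deg_G(v)=n-3$ and let $w_1,w_2$ be its two non-neighbours, so $V(G)=\{v\}\cup N(v)\cup\{w_1,w_2\}$ with $|N(v)|=n-3$. The single observation driving every count is that, since $v$ already dominates $N[v]=\{v\}\cup N(v)$, a set $\Gamma\ni v$ is a $\gamma(G)$-set if and only if $\Gamma\setminus\{v\}$ has the right size and dominates $\{w_1,w_2\}$. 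Thus $DV(v)$ always reduces to counting the ways the remaining $\gamma(G)-1$ vertices can cover $w_1$ and $w_2$.

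For the connected case (ii) I would split on $\gamma(G)$. If $\gamma(G)=2$, then $\{v,x\}$ is a $\gamma$-set iff $x$ dominates both $w_1$ and $w_2$, i.e. $x\in N[w_1]\cap N[w_2]$; since $v\notin N[w_1]$ this automatically gives $x\ne v$, so $DV(v)=|N[w_1]\cap N[w_2]|$. To obtain $DV(v)\le n-2$ I would bound this intersection by cases on whether $w_1w_2\in E(G)$, using $\deg_G(w_i)\le n-3$ and the fact that an edge $w_1w_2$ consumes one unit of each of their degrees; either way the intersection has at most $n-2$ elements. If $\gamma(G)=3$, the crucial structural fact is that no $2$-set dominates $G$; applying this to the sets $\{v,x\}$ shows no vertex dominates both $w_1$ and $w_2$, i.e. $N[w_1]\cap N[w_2]=\emptyset$. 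Hence $w_1,w_2$ are non-adjacent with no common neighbour, and a pair $\{x,y\}$ covers $\{w_1,w_2\}$ iff one element lies in $N[w_1]$ and the other in $N[w_2]$. Disjointness makes the count exact: $DV(v)=|N[w_1]|\cdot|N[w_2]|$. Finally $N[w_1],N[w_2]$ are disjoint subsets of $V(G)\setminus\{v\}$, so $|N[w_1]|+|N[w_2]|\le n-1$, and AM--GM gives $DV(v)\le\left(\frac{n-1}{2}\right)^2$.

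For the disconnected case (i) I would argue from component structure. Because $v$ is adjacent to all of $N(v)$, the component $C_v$ of $v$ contains $\{v\}\cup N(v)$ and so has at least $n-2$ vertices, leaving at most two vertices outside it. If $C_v=\{v\}\cup N(v)$, then $v$ alone dominates $C_v$ (so $\gamma(C_v)=1$) and the two outside vertices form $K_2$ or $2K_1$: the former yields $\gamma(G)=2$ with exactly the sets $\{v,w_1\},\{v,w_2\}$, hence $DV(v)=2$, while the latter forces both isolated vertices into every $\gamma$-set, giving $\gamma(G)=3$ and the unique set $\{v,w_1,w_2\}$. Otherwise exactly one of $w_1,w_2$ lies outside $C_v$ as an isolated vertex; then $\Delta(G)=n-3$ prevents any vertex from dominating the $(n-1)$-vertex component $C_v$, so $\gamma(C_v)=2$ and $\gamma(G)=3$. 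Here Observation \ref{observation2} lets me count $DV_G(v)$ entirely inside $C_v$ (the outside component being a single $K_1$), where the $\gamma$-sets containing $v$ correspond to the vertices covering the lone non-neighbour $w_i$ of $v$ in $C_v$; the count is $|N[w_i]|=1+\deg_G(w_i)$, which I then bound via $\deg_G(w_i)\le\Delta(G)=n-3$.

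The step I expect to be the main obstacle is the $\gamma(G)=3$ count in both parts. In the connected case everything hinges on first establishing the disjointness $N[w_1]\cap N[w_2]=\emptyset$, which converts an awkward pair-count into the clean product $|N[w_1]|\cdot|N[w_2]|$ that the AM--GM optimization then controls; in the disconnected sub-case the delicate point is identifying precisely which vertices of the large component complete $\{v,w_i\}$ to a $\gamma$-set and extracting the sharp constant from the degree constraint on $w_i$.
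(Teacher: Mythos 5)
Your strategy is in substance the paper's own, just reorganized: the paper splits into four cases according to how the two non-neighbours of $v$ attach to $N(v)$, whereas you split by connectivity and then by $\gamma(G)$; but the three counts you perform --- $|N[w_1]\cap N[w_2]|$ when $\gamma(G)=2$, the product $|N[w_1]|\cdot|N[w_2]|$ (after proving the disjointness $N[w_1]\cap N[w_2]=\emptyset$) when $G$ is connected with $\gamma(G)=3$, and $|N[w_i]|$ in the disconnected case with one isolated non-neighbour --- are exactly the counts in the paper's Cases 3, 4.1, and 2. Your treatment of part (ii) is complete and correct, including the AM--GM step, and your single formula $DV(v)=|N[w_1]\cap N[w_2]|$ for the $\gamma=2$ count is actually cleaner than the paper's ``$2+|N[\alpha]\cap N[\beta]|$.''

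The issue is part (i). In your final disconnected subcase (one non-neighbour $w_i$ inside the big component, the other isolated), your count gives $DV(v)=|N[w_i]|=1+\deg_G(w_i)\le 1+(n-3)=n-2$, which is strictly weaker than the bound $n-3$ asserted in the theorem; so as written your argument does not establish statement (i). But the failure is not yours: the stated bound $n-3$ is false. Take $G=K_{2,3}\cup K_1$ with parts $\{v,w_1\}$ and $\{a,b,c\}$ and isolated vertex $w_2$: then $n=6$, $\deg_G(v)=\Delta(G)=3=n-3$, $G$ is disconnected, $\gamma(G)=3$, and the $\gamma(G)$-sets containing $v$ are exactly $\{v,x,w_2\}$ with $x\in N[w_1]=\{w_1,a,b,c\}$, so $DV(v)=4=n-2>n-3$. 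The paper reaches $n-3$ by applying Proposition \ref{degree n-2} to the subgraph $G'=\langle V(G)-\{\beta\}\rangle$ of order $n-1$; however, the inequality ``$|N[w]|\le n-2$'' appearing at the end of that proposition's proof is itself erroneous (only $|N[w]|\le n-1$ is true, which is what the proposition's statement claims), and applying the correct statement to $G'$ yields precisely your bound $n-2$, not $n-3$. So your proof is sound and proves the corrected assertion; what it is missing is only the explicit observation that your bound contradicts the theorem as stated --- part (i) should read $DV(v)\le n-2$, and that bound is attained by the example above.
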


\begin{proof}
Since $\deg_G(v)=\Delta(G)=n-3$, there are two vertices, say
$\alpha$ and $\beta$, such that $v \alpha, v \beta \not\in E(G)$.
We consider four cases.

\begin{figure}[htbp]
\begin{center}
\scalebox{0.5}{\input{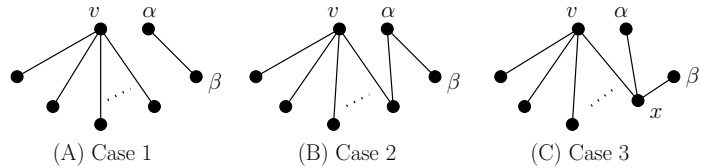}} \caption{Cases 1, 2, and
3 when $\Delta(G)=n-3$}\label{max1}
\end{center}
\end{figure}

\emph{Case 1. Neither $\alpha$ nor $\beta$ is adjacent to any
vertex in $N[v]$:} Let $G^{\prime}= \langle V(G) - \{\alpha,
\beta\} \rangle$. Then $\deg_{G'}(v)=n-3$ with $|V(G^{\prime})|=n-2$.
By Observation~\ref{degree n-1}, $\gamma(G')=1$ and
$DV_{G'}(v)=1$. First suppose $\alpha$ and $\beta$ are isolated
vertices in $G$. (Consider (A) of Figure \ref{max1} with the edge
$\alpha \beta$ being removed.) Observation~\ref{observation2},
together with $\gamma(\langle\{\alpha,\beta\}\rangle)=2$ and
$\tau(\langle\{\alpha,\beta\}\rangle)=1$, yields $\gamma(G)=3$ and
$DV_G(v)=1$. Next assume that $G$ has no isolated vertex, then $\alpha
\beta \in E(G)$ (see (A) of Figure \ref{max1}). Observation
\ref{observation2}, together with $\gamma(\langle\{\alpha,\beta \}\rangle)=1$ and
$\tau(\langle\{\alpha,\beta \}\rangle)=2$, yields $\gamma(G)=2$ and $DV_G(v)=2$.\\

\emph{Case 2. Exactly one of $\alpha$ and $\beta$ is adjacent to a
vertex in $N(v)$:} Without loss of generality, assume that
$\alpha$ is adjacent to a vertex in $N(v)$. First suppose that $G$
is not connected. Then $\alpha \beta \not\in E(G)$. (Consider (B)
of Figure \ref{max1} with the edge $\alpha \beta$ being removed.)
Let $G'=\langle V(G)-\{\beta\} \rangle$. Then $\deg_{G'}(v)=n-3$ with
$|V(G')|=n-1$. By Proposition \ref{degree n-2}, $\gamma(G')=2$ and
$DV_{G'}(v)=|N[\alpha]| \le n-3$. Observation \ref{observation2},
together with $\gamma(\langle \{\beta\} \rangle)=1$ and
$\tau(\langle \{\beta\}\rangle)=1$, yields $\gamma(G)=3$ and
$DV_G(v)=|N[\alpha]|\le n-3$. Next suppose that $G$ is connected.
Then $\alpha \beta \in E(G)$ and $\alpha$ is a support vertex of
$G$. (See (B) of Figure \ref{max1}.) Since $\Delta(G)<n-1$,
$\gamma(G)>1$. Since $\{v,\alpha\}$ is a $\gamma(G)$-set,
$\gamma(G)=2$. Noting that $v$ dominates $V(G)-\{\alpha, \beta\}$,
the number of $\gamma(G)$-sets containing $v$ equals the number of
vertices in $G$ that dominates both $\alpha$ and $\beta$. Thus $DV_G(v)=2$.\\

\emph{Case 3. There exists a vertex in $N(v)$, say $x$, that is
adjacent to both $\alpha$ and $\beta$:} Notice that $n \ge 6$ in
this case, since $vx, \alpha x, \beta x \in E(G)$ and
$\deg_G(v)=\Delta (G)$ (see (C) of Figure \ref{max1}). Since $\{v,
x\}$ is a $\gamma(G)$-set, $\gamma(G)=2$. If $\alpha\beta \not\in
E(G)$, then $DV(v)=|N[\alpha] \cap N[\beta]| \le n-3$. If
$\alpha\beta \in E(G)$, then $|N[\alpha] \cap N[\beta]| \le n-4$
since $\Delta (G)=n-3$. Noting both $\{v, \alpha\}$ and $\{v,
\beta\}$ are $\gamma(G)$-sets, we have $DV(v)=2+|N[\alpha] \cap N[\beta]| \le n-2$.\\

\begin{figure}[htbp]
\begin{center}
\scalebox{0.5}{\input{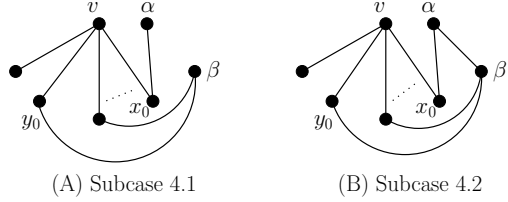}} \caption{Subcases 4.1
and 4.2 when $\Delta(G)=n-3$}\label{max2}
\end{center}
\end{figure}

\emph{Case 4. There exist vertices in $N(v)$ that are adjacent to
$\alpha$ and $\beta$, but no vertex in $N(v)$ is adjacent to both
$\alpha$ and $\beta$:} Let $x_0 \in N(v) \cap N(\alpha)$ and $y_0
\in N(v) \cap N(\beta)$. We consider two
subcases.\\

\emph{Subcase 4.1. $\alpha \beta \not\in E(G)$ (see (A) of Figure
\ref{max2}):} First, assume $\gamma(G)=2$. This is possible when
$\{x_0, y_0\}$ is a $\gamma(G)$-set satisfying $N[x_0] \cup
N[y_0]=V(G)$. Notice that there's no $\gamma(G)$-set containing
$v$ when $\gamma(G)=2$ since there's no vertex in $G$ that is
adjacent to both $\alpha$ and $\beta$. Thus $DV(v)=0$. Second,
assume $\gamma(G)>2$. Since $\{v, \alpha, \beta\}$ is a
$\gamma(G)$-set, $\gamma(G) =3$. Noticing that every
$\gamma(G)$-set contains a vertex in $N[\alpha]$ and a vertex in
$N[\beta]$ and that $N[\alpha] \cap N[\beta] =\emptyset$, we see
$$DV(v) = |N[\alpha]| \cdot |N[\beta]| \le
\left(\frac{|N[\alpha]|+|N[\beta]|}{2}\right)^2 \le
\left(\frac{n-1}{2}\right)^2,$$ where the first inequality is the
arithmetic-geometric mean inequality (i.e., $\frac{a+b}{2} \geq
\sqrt{ab}$ for $a, b \geq 0$).\\

\emph{Subcase 4.2. $\alpha \beta \in E(G)$ (see (B) of Figure
\ref{max2}):} Since $\{v, \alpha\}$ is a $\gamma(G)$-set,
$\gamma(G)=2$. Since there's no vertex in $N(v)$ that is adjacent
to both $\alpha$ and $\beta$, there are only two $\gamma(G)$-sets
containing $v$, i.e., $\{v, \alpha\}$ and $\{v, \beta\}$. Thus
$DV(v)=2$. \hfill
\end{proof}

\noindent\textbf{Remark.} In the proof of Theorem~\ref{theorem on
Delta(G)=n-3}, we observe that one may have $DV(v)=0$ even though
$\deg_G(v)=\Delta(G) \le n-3$. See Figure \ref{zeroTDV-max-deg}
for a graph of order $n$, $\deg_G(v)= \Delta(G)=n-3$,
$\gamma(G)=2$, and $DV(v)=0$.

\begin{figure}[htbp]
\begin{center}
\scalebox{0.5}{\input{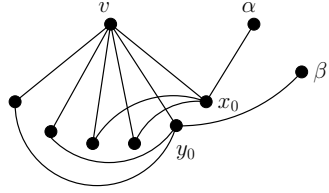}} \caption{A graph of
order $9$, $\deg_G(v)=\Delta(G)=6$, $DV(v)=0$ with a unique
$\gamma$-set $\{x_0, y_0\}$}\label{zeroTDV-max-deg}
\end{center}
\end{figure}


\section{Domination value on complete $n$-partite graphs}

For a \emph{complete} $n$-partite graph $G$, let $V(G)$ be
partitioned into $n$-partite sets $V_1$, $V_2$, $\ldots$, $V_n$,
and let $a_i=|V_i|\geq 1$ for each $1\leq i \leq n$, where $n\geq
2$.

\begin{proposition}
Let $G=K_{a_1,a_2, \ldots, a_n}$ be a complete $n$-partite graph
with $a_i \ge 2$ for each $i$ ($1 \le i \le n$). Then
$$\tau(G)=\frac{1}{2}\left[\left(\sum_{i=1}^{n} a_i\right)^2 - \sum_{i=1}^{n} a_i^2\right] \hskip .2in
\mbox{ and } \hskip .2in DV(v)=\left(\sum_{i=1}^{n} a_i\right)-a_j
\mbox{ if } v \in V_j.$$
\end{proposition}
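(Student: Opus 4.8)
The plan is to handle the two formulas together by reducing everything to a single counting problem: first pin down $\gamma(G)$, then characterize the $\gamma(G)$-sets completely, and finally count both the total number and the number through a fixed vertex. The arithmetic at the end is routine, so the substance of the argument lies in the characterization.

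The useful structural fact is that for $x \in V_i$ one has $N[x] = V(G) \setminus (V_i \setminus \{x\})$, since $x$ is adjacent to every vertex outside its own part. Because $a_i \ge 2$, the set $V_i \setminus \{x\}$ is nonempty, so no single vertex dominates $G$ and therefore $\gamma(G) \ge 2$. Choosing $x \in V_i$ and $y \in V_j$ with $i \ne j$ gives $N[x] \cup N[y] = V(G)$: the only vertices outside $N[x]$ are those of $V_i \setminus \{x\}$, and these lie in $N[y]$ since they are not in $V_j$, and symmetrically. Hence $\gamma(G) = 2$.

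The key—and most delicate—step is to show that a $2$-element dominating set $\{x, y\}$ must have $x$ and $y$ in distinct parts. From the neighborhood formula, the vertices left undominated by $\{x,y\}$, with $x \in V_i$ and $y \in V_j$, are exactly $(V_i \setminus \{x\}) \cap (V_j \setminus \{y\})$. For $i \ne j$ this intersection is empty, confirming that every cross-pair dominates; for $i = j$ it equals $V_i \setminus \{x, y\}$. Ruling out same-part pairs thus comes down to guaranteeing $V_i \setminus \{x,y\} \ne \emptyset$, i.e.\ that each part supplies a vertex beyond the two selected. This is precisely where the hypothesis on the part sizes is invoked, and I expect it to be the main obstacle to a fully rigorous argument. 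Granting it, the $\gamma(G)$-sets are exactly the unordered pairs of vertices drawn from two different parts.

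The counting is then immediate. The number of such cross-pairs is $\sum_{i<j} a_i a_j$, and expanding $\left(\sum_i a_i\right)^2 = \sum_i a_i^2 + 2 \sum_{i<j} a_i a_j$ rewrites this as $\tau(G) = \frac{1}{2}\left[\left(\sum_i a_i\right)^2 - \sum_i a_i^2\right]$. For a fixed $v \in V_j$, a $\gamma(G)$-set contains $v$ exactly when it pairs $v$ with a vertex lying outside $V_j$, and there are $\left(\sum_i a_i\right) - a_j$ such vertices; hence $DV(v) = \left(\sum_i a_i\right) - a_j$. As a consistency check one may verify $\sum_{v \in V(G)} DV(v) = 2\tau(G) = \tau(G)\gamma(G)$, matching Observation \ref{observation}.
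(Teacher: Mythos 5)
Your structural setup is sound, and you correctly isolated the crux: the claim that every $2$-element dominating set is a cross-pair. But that is exactly the step that fails, and it cannot be repaired under the stated hypothesis. Your own formula for the undominated vertices shows why: if $x,y$ lie in the same part $V_i$, the vertices missed by $\{x,y\}$ are $V_i \setminus \{x,y\}$, and this set is empty precisely when $a_i = 2$ and $\{x,y\} = V_i$. The hypothesis $a_i \ge 2$ therefore does not rule out same-part pairs; one would need $a_i \ge 3$. When some part has size exactly $2$, the pair consisting of both its vertices is an additional $\gamma(G)$-set, so the correct count is $\tau(G) = \sum_{i<j} a_i a_j + |\{\, i : a_i = 2 \,\}|$, and each vertex $v$ of such a part has $DV(v) = \deg_G(v) + 1$ rather than $\deg_G(v)$. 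A concrete witness: $K_{2,2} \cong C_4$ has all six pairs of vertices as $\gamma$-sets, so $\tau(K_{2,2}) = 6$ and $DV(v) = 3$ --- exactly what the paper computes for $C_4$ in the Examples of Section 4 --- whereas the proposition's formulas give $4$ and $2$.

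In other words, the obstacle you flagged as ``the main obstacle to a fully rigorous argument'' is not a defect of your write-up alone: the proposition is false as stated and is correct only when every $a_i \ge 3$ (or with the correction terms above). For comparison, the paper's own proof takes a different route --- it asserts $DV(v) = \deg_G(v)$ directly (implicitly assuming the same cross-pair characterization you tried to prove) and then recovers $\tau(G)$ from Observation \ref{observation} via $\sum_{v} DV(v) = 2\tau(G)$ --- and in doing so it silently commits the very error you identified; your more explicit neighborhood analysis is what exposes it, and it puts the paper's Section 3 corollary for $K_{a_1,a_2}$ in direct contradiction with its own $C_4$ example. Had the characterization been true, your direct count of cross-pairs would have been a complete proof, and somewhat more self-contained than the paper's, since it obtains $\tau(G)$ without routing through Observation \ref{observation}.
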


\begin{proof}
Since $\Delta(G) < |V(G)|-1$, $\gamma(G)>1$. Any two vertices from
different partite sets form a $\gamma(G)$-set, so $\gamma(G)=2$.
If $v \in V_j$, then
\begin{equation}\label{DV for n-partite}
DV(v)=\deg_G(v)=\left(\sum_{i=1}^{n} a_i\right)-a_j.
\end{equation}
From Observation \ref{observation} and (\ref{DV for n-partite}),
we have
\begin{eqnarray*}
\sum_{j=1}^{n}\sum_{v\in V_j} DV(v) =2 \tau(G) \
\Longleftrightarrow \ \sum_{j=1}^{n}\left(a_j\sum_{i=1}^{n} a_i
-a_{j}^{2}\right) =2\tau(G) \\ 
 \Longleftrightarrow \ \
\left(\sum_{i=1}^{n} a_i\right)\left(\sum_{j=1}^{n}
a_j\right)-\sum_{j=1}^{n} a_j^2 =2\tau(G),
\end{eqnarray*}
and thus the formula for $\tau(G)$ follows.\hfill
\end{proof}

\begin{proposition}\label{n-partite2}
Let $G=K_{a_1,a_2, \ldots, a_n}$ be a complete $n$-partite graph
such that $a_i=1$ for some $i$, say $a_j=1$ for $j=1,2,\ldots, k$,
where $1 \le k \le n$. Then $\tau(G)=k$ and
\begin{equation*} DV(v)= \left\{
\begin{array}{ll}
1 & \mbox{ if } v \in V_j \  (1 \le j \le \ k)\\
0 & \mbox{ if } v \in V_j \ (k+1 \le j \le n) .
\end{array} \right.
\end{equation*}
\end{proposition}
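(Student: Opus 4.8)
The plan is to first pin down $\gamma(G)$ by exploiting the singleton parts, then enumerate the $\gamma(G)$-sets explicitly, and finally read off $\tau(G)$ and the two values of $DV(v)$.

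First I would observe that if a part $V_j=\{w\}$ is a singleton, then $w$ lies in a different part from every other vertex of $G$, so by the definition of a complete $n$-partite graph $w$ is adjacent to all of $V(G)-\{w\}$; that is, $\deg_G(w)=|V(G)|-1$. Hence $\{w\}$ is a dominating set, forcing $\gamma(G)=1$. Since $1\le k$, at least one such $w$ exists, so this step is always available.

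Next I would characterize the $\gamma(G)$-sets. Because $\gamma(G)=1$, every $\gamma(G)$-set is a singleton $\{v\}$, and $\{v\}$ dominates $G$ if and only if $v$ is adjacent to every other vertex, i.e.\ $\deg_G(v)=|V(G)|-1$. A vertex $v\in V_j$ fails this precisely when $a_j\ge 2$, since then the remaining $a_j-1$ vertices of $V_j$ are non-neighbors of $v$. Therefore the dominating singletons are exactly the vertices of the $k$ singleton parts $V_1,\ldots,V_k$, which gives $\tau(G)=k$. Then I would compute $DV(v)$ in two cases: if $v\in V_j$ with $1\le j\le k$, then $\{v\}$ is the unique $\gamma(G)$-set containing $v$ (all $\gamma(G)$-sets being singletons), so $DV(v)=1$; if $v\in V_j$ with $k+1\le j\le n$, then $v$ belongs to no singleton $\gamma(G)$-set, so $DV(v)=0$.

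The main obstacle --- really the only point requiring care --- is the if-and-only-if characterization of the dominating singletons: one must verify both that every singleton-part vertex dominates and that no vertex from a part of size $\ge 2$ does. Everything else is bookkeeping. As a consistency check, Observation~\ref{observation} predicts $\sum_{v\in V(G)}DV(v)=\tau(G)\cdot\gamma(G)=k$, which matches summing the two cases ($k$ vertices of value $1$ and the remaining vertices of value $0$); and the special case $k=n$ recovers $G=K_n$ with $\gamma(K_n)=1$, $\tau(K_n)=n$, and $DV(v)=1$ for every $v$, consistent with the formula.
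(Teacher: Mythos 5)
Your proof is correct and takes essentially the same approach as the paper's: both rest on the fact that the $\gamma(G)$-sets are exactly the singletons $\{v\}$ with $\deg_G(v)=|V(G)|-1$, i.e.\ the vertices of the $k$ singleton parts. The only cosmetic difference is that the paper cites Observation~\ref{degree n-1} for this characterization and then obtains $\tau(G)=\sum_{v\in V(G)}DV_G(v)=k$ from Observation~\ref{observation} with $\gamma(G)=1$, whereas you prove the characterization inline and count the $\gamma(G)$-sets directly.
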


\begin{proof}
Since $\Delta(G)=|V(G)|-1$, by Observation \ref{degree n-1},
$\gamma(G)=1$ and $DV(v)$ follows. By Observation
\ref{observation}, together with $\gamma(G)=1$, we have $\tau(G)=\sum_{v
\in V(G)}DV_G(v)=k$.\hfill
\end{proof}

If $a_i=1$ for each $i$ ($1\leq i\leq n$), then $G=K_n$. As an
immediate consequence of Proposition \ref{n-partite2}, we have the
following.

\begin{corollary}
If $G=K_n$ ($n \ge 1$), then $\tau(G)= n$ and $DV(v)=1$ for each
$v \in V(K_n)$.
\end{corollary}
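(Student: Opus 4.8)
The plan is to recognize $K_n$ as the extreme special case of a complete $n$-partite graph in which every partite set is a singleton, and then simply read off both conclusions from Proposition~\ref{n-partite2}. Concretely, setting $a_i = 1$ for every $i$ with $1 \le i \le n$ forces $G = K_{1,1,\ldots,1} = K_n$, and in the notation of Proposition~\ref{n-partite2} this is precisely the boundary instance $k = n$, i.e. \emph{all} partite sets have size one. Since $k=n$ lies in the admissible range $1 \le k \le n$ for that proposition, no further setup is needed for $n \ge 2$.

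With $k = n$, the formula $\tau(G) = k$ of Proposition~\ref{n-partite2} gives $\tau(G) = n$ at once. For the domination value I would note that the value-zero branch of the piecewise formula is indexed by $j$ with $k+1 \le j \le n$; when $k = n$ this range is empty, so every vertex falls into the first branch and hence $DV(v) = 1$ for all $v \in V(K_n)$. This settles every case $n \ge 2$. The only point that falls outside the complete $n$-partite framework (where $n \ge 2$ is standing) is the base case $n = 1$, which I would dispatch directly: $K_1$ is a single vertex with no neighbors, so its unique dominating set is $\{v\}$, whence $\gamma(K_1) = 1$, $\tau(K_1) = 1 = n$, and $DV(v) = 1$.

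I do not expect a genuine obstacle here, as the statement is an immediate specialization; the two steps worth flagging are the verification that the value-zero case of Proposition~\ref{n-partite2} is vacuous when $k = n$, and the trivial check of the $n = 1$ endpoint. As an alternative, self-contained route one could bypass Proposition~\ref{n-partite2} entirely: since $\Delta(K_n) = n-1 = |V(K_n)| - 1$, Observation~\ref{degree n-1} yields $\gamma(K_n) = 1$, and because $K_n$ is complete each singleton $\{v\}$ is a dominating set while no other single vertex lies in it, giving exactly $n$ distinct $\gamma(K_n)$-sets and $DV(v) = 1$ for every $v$.
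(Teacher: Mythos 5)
Your proposal is correct and matches the paper's approach exactly: the paper states this corollary as an immediate consequence of Proposition~\ref{n-partite2} with all $a_i=1$ (so $k=n$ and the value-zero branch is vacuous), which is precisely your main argument. Your explicit handling of the $n=1$ endpoint, which falls outside the paper's $n\ge 2$ convention for complete $n$-partite graphs, is a small point of added care the paper silently glosses over.
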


If $n=2$, then $G=K_{a_1, a_2}$ is a complete bi-partite graph.
\begin{corollary}
If $G=K_{a_1, a_2}$, then
\begin{equation*}
\tau(G)= \left\{
\begin{array}{ll}
a_1 \cdot a_2 & \mbox{ if } a_1, a_2 \ge 2 \\
2 & \mbox{ if } a_1=a_2=1\\
1 & \mbox{ if } \{a_1, a_2\} = \{1,x\}, \mbox{ where }x>1 .
\end{array} \right.
\end{equation*}
If $a_1, a_2 \ge 2$, then
\begin{equation*}
DV(v)= \left\{
\begin{array}{ll}
a_2 & \mbox{ if } v \in V_1 \\
a_1 & \mbox{ if } v \in V_2 .
\end{array} \right.
\end{equation*}
If $a_1=a_2=1$, $DV(v)=1$ for any $v$ in $K_{1,1}$. 
If $\{a_1, a_2\} = \{1,x\}$ with $x>1$, say $a_1=1$ and $a_2=x$,
then
\begin{equation*} DV(v)= \left\{
\begin{array}{ll}
1 & \mbox{ if } v \in V_1 \\
0 & \mbox{ if } v \in V_2 .
\end{array} \right.
\end{equation*}
\end{corollary}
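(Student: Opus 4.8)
The plan is to obtain this statement as a direct specialization of the two preceding $n$-partite propositions to the case $n=2$, organizing the argument according to how many of $a_1,a_2$ equal $1$. Since $K_{a_1,a_2}$ is a complete $2$-partite graph with partite sets $V_1,V_2$, every hypothesis needed by those propositions is already available; the only work is to identify which proposition applies in each regime and to carry out the resulting algebra.

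First I would treat the case $a_1,a_2\ge 2$. Here the first $n$-partite proposition of this section applies directly, and setting $n=2$ gives $\tau(G)=\tfrac12[(a_1+a_2)^2-(a_1^2+a_2^2)]$. Expanding $(a_1+a_2)^2$ and cancelling shows the bracketed quantity equals $2a_1a_2$, so $\tau(G)=a_1a_2$. For the domination values, the same proposition gives $DV(v)=(a_1+a_2)-a_j$ for $v\in V_j$, which evaluates to $a_2$ when $j=1$ and to $a_1$ when $j=2$, matching the claimed formula.

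Next I would handle the cases in which some $a_i=1$, which are governed by Proposition~\ref{n-partite2}. If $a_1=a_2=1$ then both parts are singletons, so the number $k$ of singleton parts is $2$, and that proposition yields $\tau(G)=2$ with $DV(v)=1$ for every vertex (consistent with $K_{1,1}=K_2$). If instead $\{a_1,a_2\}=\{1,x\}$ with $x>1$, say $a_1=1$ and $a_2=x$, then exactly one part is a singleton, so $k=1$; Proposition~\ref{n-partite2} then gives $\tau(G)=1$, together with $DV(v)=1$ for $v\in V_1$ and $DV(v)=0$ for $v\in V_2$.

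There is no genuine obstacle here, since the statement is a corollary of results already established; the only points requiring care are correctly matching each sub-statement to the proposition that supplies it---in particular recognizing that $k$ in Proposition~\ref{n-partite2} is precisely the number of singleton parts---and performing the single algebraic simplification $\tfrac12[(a_1+a_2)^2-(a_1^2+a_2^2)]=a_1a_2$.
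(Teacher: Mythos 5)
Your proposal is correct and matches the paper's intent exactly: the paper states this corollary without proof, as an immediate specialization to $n=2$ of the two complete $n$-partite propositions, which is precisely what you carry out (including the simplification $\tfrac12[(a_1+a_2)^2-a_1^2-a_2^2]=a_1a_2$ and the identification of $k$ as the number of singleton parts).
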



\section{Domination value on cycles}

Let the vertices of the cycle $C_n$ be labeled $1$ through $n$
consecutively in counter-clockwise order, where $n \ge 3$. Observe
that the domination value is constant on the vertices of $C_n$,
for each $n$, by vertex-transitivity. Recall that
$\gamma(C_n)=\lceil \frac{n}{3} \rceil$ for $n \ge 3$ (see
p.364, \cite{CZ}).\\

\noindent {\bf Examples.} (a) $DM(C_4)=\{\{1,
2\}, \{1,3\}, \{1,4\}, \{2, 3\}, \{2, 4\}, \{3, 4\}\}$ since $\gamma(C_4)=2$;
so $\tau(C_4)=6$ and $DV(i)=3$ for each $i \in V(C_4)$.\\

(b) $\gamma(C_6)=2$, $DM(C_6)=\{\{1, 4\}, \{2, 5\}, \{3, 6\} \}$;
so $\tau(C_6)=3$ and $DV(i)=1$ for each $i \in V(C_6)$.\\

\begin{theorem}\label{theorem on cycles}
For $n \ge 3$,
\begin{equation*}
\tau(C_n) = \left\{
\begin{array}{lr}
\ 3 & \mbox{ if } n \equiv 0  \mbox{ (mod 3)} \ \\
\ n(1+\frac{1}{2} \lfloor \frac{n}{3} \rfloor) & \mbox{ if } n \equiv 1 \mbox{ (mod 3)} \ \\
\ n & \mbox{ if } n \equiv 2  \mbox{ (mod 3)} .
\end{array} \right.
\end{equation*}
\end{theorem}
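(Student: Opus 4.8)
The plan is to encode each dominating set by its sequence of gaps and then count via a double-counting identity. Label the chosen vertices of a subset $D\subseteq V(C_n)$ in cyclic order and call a \emph{gap} the number of consecutive unchosen vertices between two successive chosen vertices. First I would observe that a vertex lying in the interior of a gap of length $\ge 3$ has no neighbor in $D$, whereas every unchosen vertex in a gap of length $\le 2$ is adjacent to a bounding vertex of $D$; hence $D$ is a dominating set if and only if all of its cyclic gaps are at most $2$. If $|D|=m$ with gaps $d_1,\dots,d_m$, then $\sum_{i=1}^{m}(d_i+1)=n$, so $\sum_i d_i=n-m\le 2m$, which forces $m\ge\lceil n/3\rceil=\gamma(C_n)$. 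Thus a $\gamma(C_n)$-set is precisely a choice of $m=\lceil n/3\rceil$ vertices whose cyclic gaps all lie in $\{0,1,2\}$ and sum to $n-m$.

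Next I would count by a bijection on pointed sets. Let $C$ be the number of ordered sequences $(d_1,\dots,d_m)$ with each $d_i\in\{0,1,2\}$ and $\sum_i d_i=n-m$. I claim the pairs $(D,v)$ with $D\in DM(C_n)$ and $v\in D$ are in bijection with the pairs consisting of a starting vertex $v\in V(C_n)$ and a sequence $(d_1,\dots,d_m)$ as above: given $(D,v)$, read the gaps of $D$ starting at $v$ in counter-clockwise order, and conversely $v$ together with the gap sequence reconstructs $D$ uniquely. Since each $D\in DM(C_n)$ has exactly $m$ elements, counting these pairs two ways yields
$$m\cdot\tau(C_n)=n\cdot C,\qquad\text{so}\qquad \tau(C_n)=\frac{n\,C}{m}.$$
It then remains only to evaluate $C$, the number of compositions of $n-m$ into $m$ parts drawn from $\{0,1,2\}$, in each residue class of $n$ modulo $3$.

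Finally I would compute $C$ case by case, viewing each admissible sequence as obtained from the all-twos sequence (of total $2m$) by lowering entries until the total equals $n-m$. For $n=3k$ we have $m=k$ and $n-m=2m$, forcing every $d_i=2$, so $C=1$ and $\tau(C_n)=n/m=3$. For $n=3k+2$ we have $m=k+1$ and $n-m=2m-1$, so exactly one entry equals $1$ and the rest equal $2$, giving $C=k+1$ and $\tau(C_n)=n(k+1)/(k+1)=n$. For $n=3k+1$ we have $m=k+1$ and $n-m=2m-2$, so the total must drop by $2$, achieved either by a single entry equal to $0$ (with $k+1$ choices) or by two entries equal to $1$ (with $\binom{k+1}{2}$ choices); hence $C=(k+1)+\binom{k+1}{2}=(k+1)\tfrac{k+2}{2}$ and $\tau(C_n)=n\,\tfrac{k+2}{2}=n\bigl(1+\tfrac12\lfloor n/3\rfloor\bigr)$.

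The step I expect to be the main obstacle is setting up the double-counting bijection correctly: one must count the pointed pairs $(D,v)$ with $v\in D$ rather than the sets directly, because distinct cyclic rotates of a symmetric gap pattern can coincide as gap sequences; tracking the marked vertex $v$ breaks this symmetry and is exactly what upgrades the relation to the clean identity $m\,\tau(C_n)=nC$ rather than an inequality. Once that identity is secured, the gap characterization of domination and the three elementary composition counts are routine.
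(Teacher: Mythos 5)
Your proof is correct, but it takes a genuinely different route from the paper's. The paper classifies the $\gamma(C_n)$-sets $\Gamma$ by the isomorphism type of the induced subgraph $\langle \Gamma \rangle$ (all $K_1$'s, or $K_1$'s plus one $K_2$), and counts each type directly with ``$\Gamma$ is fixed by the choice of \dots'' arguments, handling the symmetry between the two doubly dominated vertices in the case $n \equiv 1 \pmod{3}$ by an ad hoc division by $2$. You instead encode a vertex subset by its cyclic gap sequence, characterize domination as ``all gaps at most $2$,'' and prove the pointed double-counting identity $\gamma(C_n)\,\tau(C_n) = n\,C$, where $C$ is the number of compositions of $n-\gamma(C_n)$ into $\gamma(C_n)$ parts from $\{0,1,2\}$; the three residue classes then reduce to trivial composition counts (all parts $2$; one part $1$; one part $0$ or two parts $1$), and the arithmetic checks out in each case. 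The two methods correspond exactly --- a zero gap is the paper's $K_2$ component, and a gap of $1$ creates a doubly dominated vertex, the paper's $x$ and $y$ --- but your identity makes the rotational bookkeeping automatic and uniform across the three cases, and is arguably more rigorous than the paper's informal fixing arguments; what the paper's structural description of $\langle \Gamma \rangle$ buys in exchange is that the same classification is reused verbatim in the later section on paths and in computing the domination values $DV(v)$ themselves.
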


\begin{proof}
First, let $n=3k$, where $k \ge 1$. Here $\gamma(C_{n})=k$; a
$\gamma(C_{n})$-set $\Gamma$ comprises $k$ $K_1$'s and $\Gamma$ is
fixed by the choice of the first $K_1$. There exists exactly one
$\gamma(C_{n})$-set containing the vertex $1$, and there are two
$\gamma(C_{n})$-sets omitting the vertex 1 such as $\Gamma$
containing
the vertex 2 and $\Gamma$ containing the vertex $n$. Thus $\tau(C_{n})=3$.\\

Second, let $n=3k+1$, where $k \ge 1$. Here $\gamma(C_{n})=k+1$; a
$\gamma(C_{n})$-set $\Gamma$ is constituted in exactly one of the
following two ways: 1) $\Gamma$ comprises $(k-1)$ $K_1$'s and one
$K_2$; 2) $\Gamma$ comprises $(k+1)$ $K_1$'s. \\

\emph{Case 1) $\langle \Gamma \rangle \cong (k-1)K_1 \cup K_2$:} Note that
$\Gamma$ is fixed by the choice of the single $K_2$. Choosing a
$K_2$ is the same as choosing its initial vertex
in the counter-clockwise order. Thus $\tau=3k+1$.\\

\emph{Case 2) $\langle \Gamma \rangle \cong (k+1)K_1$:} Note that, since each
$K_1$ dominates three vertices, there are exactly two vertices,
say $x$ and $y$, each of whom is adjacent to two distinct $K_1$'s
in $\Gamma$. And $\Gamma$ is fixed by the placements of $x$ and
$y$. There are $n=3k+1$ ways of choosing $x$. Consider the
$P_{3k-2}$ (a sequence of $3k-2$ slots) obtained as a result of
cutting from $C_{n}$ the $P_3$ centered about $x$. Vertex $y$ may
be placed in the first slot of any of the
$\lceil\frac{3k-2}{3}\rceil=k$ subintervals of the $P_{3k-2}$. As
the order of selecting the two vertices
$x$ and $y$ is immaterial, $\tau=\frac{(3k+1)k}{2}$. \\

Summing over the two disjoint cases, we get
$$\tau(C_{n})=(3k+1)+\frac{(3k+1)k}{2}=(3k+1)\left(1+\frac{k}{2}\right)= n \left(1+\frac{1}{2} \left\lfloor\frac{n}{3} \right\rfloor \right).$$

Finally, let $n=3k+2$, where $k \ge 1$. Here $\gamma(C_{n})=k+1$;
a $\gamma(C_{n})$-set $\Gamma$ comprises of only $K_1$'s and is
fixed by the placement of the only vertex which is adjacent to two
distinct $K_1$'s in $\Gamma$. Thus $\tau(C_{n})=n$. \hfill
\end{proof}

\begin{corollary}
Let $v\in V(C_n)$, where $n \ge 3$. Then
\begin{equation*}
DV(v) = \left\{
\begin{array}{lr}
\ 1 & \mbox{ if } n \equiv 0  \mbox{ (mod 3)} \ \\
\ \frac{1}{2} \lceil\frac{n}{3}\rceil (1+ \lceil\frac{n}{3}\rceil) & \mbox{ if } n \equiv 1  \mbox{ (mod 3)} \ \\
\ \lceil\frac{n}{3}\rceil & \mbox{ if } n \equiv 2 \mbox{ (mod 3) }.
\end{array} \right.
\end{equation*}
\end{corollary}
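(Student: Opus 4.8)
The plan is to reduce everything to Observation~\ref{observation} via the vertex-transitivity of $C_n$. Since $C_n$ admits a rotation automorphism cycling through all vertices, Observation~\ref{observation1} forces $DV(u)=DV(v)$ for all $u,v\in V(C_n)$; denote this common value by $d$. Summing over the $n$ vertices and applying Observation~\ref{observation} then gives $n\cdot d=\sum_{v}DV(v)=\tau(C_n)\cdot\gamma(C_n)$, so that
$$DV(v)=\frac{\tau(C_n)\cdot\gamma(C_n)}{n}.$$
This single identity is the engine of the whole proof; no direct enumeration of $\gamma(C_n)$-sets is needed, since Theorem~\ref{theorem on cycles} already supplies $\tau(C_n)$ and the quoted fact $\gamma(C_n)=\lceil n/3\rceil$ supplies the other factor.

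The remaining work is to substitute the three cases of Theorem~\ref{theorem on cycles} and simplify. First I would handle $n\equiv 0\pmod 3$: here $\tau(C_n)=3$ and $\gamma(C_n)=n/3=\lceil n/3\rceil$, so the product is $3\cdot(n/3)=n$ and dividing by $n$ yields $DV(v)=1$. Next, for $n\equiv 2\pmod 3$, we have $\tau(C_n)=n$, and dividing $n\cdot\lceil n/3\rceil$ by $n$ gives $DV(v)=\lceil n/3\rceil$ at once. Both of these are immediate.

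The only case requiring a genuine (if still routine) algebraic step is $n\equiv 1\pmod 3$, so I expect this to be the one mild obstacle. Writing $n=3k+1$, we have $\lfloor n/3\rfloor=k$ while $\lceil n/3\rceil=k+1=\gamma(C_n)$, and $\tau(C_n)=n\bigl(1+\tfrac12 k\bigr)$. Then
$$DV(v)=\frac{n\bigl(1+\tfrac12 k\bigr)(k+1)}{n}=\Bigl(1+\tfrac{k}{2}\Bigr)(k+1)=\frac{(k+2)(k+1)}{2}.$$
The point to check is that this equals the claimed expression $\tfrac12\lceil n/3\rceil\bigl(1+\lceil n/3\rceil\bigr)$: substituting $\lceil n/3\rceil=k+1$ gives $\tfrac12(k+1)(k+2)$, which matches. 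Thus in all three residue classes the formula follows, completing the argument. The subtlety, such as it is, lies entirely in keeping straight the conversion between $\lfloor n/3\rfloor$ (appearing in $\tau$) and $\lceil n/3\rceil$ (appearing in $\gamma$ and in the target formula) when $n\equiv 1\pmod 3$.
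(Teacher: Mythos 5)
Your proposal is correct and takes exactly the paper's approach: the paper's proof of this corollary is a one-line citation of Observation~\ref{observation}, Observation~\ref{observation1}, and Theorem~\ref{theorem on cycles}, which is precisely your identity $DV(v)=\tau(C_n)\cdot\gamma(C_n)/n$ obtained from vertex-transitivity followed by case-by-case substitution. Your algebra in all three residue classes, including the $\lfloor n/3\rfloor$ versus $\lceil n/3\rceil$ bookkeeping when $n\equiv 1\pmod{3}$, checks out.
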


\begin{proof}
It follows by Observation~\ref{observation}, Observation~\ref{observation1}, and Theorem~\ref{theorem on cycles}.~\hfill
\end{proof}


\section{Domination value on paths}

Let the vertices of the path $P_n$ be labeled $1$ through $n$
consecutively. Recall that $\gamma(P_n)=\lceil \frac{n}{3} \rceil$ for $n \ge 2$.\\

\noindent {\bf Examples.} (a) $\gamma(P_4)=2$, $DM(P_4)=\{
\{1,3\}, \{1,4\}, \{2, 3\}, \{2,4\}\}$; so $\tau(P_4)=4$ and
$DV(i)=2$ for each $i \in V(P_4)$.\\

(b) $\gamma_t(P_5)=2$, $DM(P_5)=\{\{1, 4\}, \{2,4\}, \{2,5\}\}$;
so $\tau(P_5)=3$, and
\begin{equation*}
DV(i)= \left\{
\begin{array}{ll}
1 & \mbox{ if } i =1,5\\
2 & \mbox{ if } i=2,4\\
0 & \mbox{ if } i =3 .
\end{array} \right.
\end{equation*}

\noindent\textbf{Remark.} Since $P_n \subset C_n$ with the same vertex set,
by Proposition \ref{subgraph-tau}, we have $\tau(P_n) \le
\tau(C_n)$ for $n \ge 3$, as one can verify from the theorem
below.

\begin{theorem}\label{theorem on paths}
For $n \ge 2$,
\begin{equation*} \tau(P_n) = \left\{
\begin{array}{lr}
\ 1 & \mbox{ if } n \equiv 0  \mbox{ (mod 3)} \ \\
\ n+ \frac{1}{2}\lfloor \frac{n}{3} \rfloor (\lfloor \frac{n}{3}\rfloor-1) & \mbox{ if } n \equiv 1  \mbox{ (mod 3)} \ \\
\ 2+ \lfloor \frac{n}{3} \rfloor & \mbox{ if } n \equiv 2 \mbox{
(mod 3)} .
\end{array} \right.
\end{equation*}
\end{theorem}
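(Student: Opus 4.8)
The plan is to set up an explicit bijection between the $\gamma(P_n)$-sets and a family of bounded integer compositions, and then to count these compositions residue class by residue class. Write a minimum dominating set as $D = \{d_1 < d_2 < \cdots < d_m\}$ with $m = \gamma(P_n) = \lceil n/3 \rceil$. Since each vertex dominates at most three vertices (itself and its at most two neighbors) while $P_n$ has $n$ vertices and $|D| = \lceil n/3\rceil$, a set of size $m$ dominates $P_n$ if and only if (i) $d_1 \le 2$, so that vertex $1$ is dominated, (ii) $d_m \ge n-1$, so that vertex $n$ is dominated, and (iii) $d_{i+1} - d_i \le 3$ for each $i$, so that no interior vertex is left undominated. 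I would first verify that these three gap conditions are exactly equivalent to $D$ being a dominating set of size $m$.

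Next I would encode $D$ by its vector of slacks $t_0 = d_1 - 1$, $t_i = d_{i+1} - d_i - 1$ for $1 \le i \le m-1$, and $t_m = n - d_m$. Conditions (i)--(iii) become $t_0, t_m \in \{0,1\}$ and $t_i \in \{0,1,2\}$ for $1 \le i \le m-1$, while a telescoping computation gives the identity $\sum_{i=0}^m t_i = n - m$ automatically. Conversely, any such vector reconstructs a unique $\gamma(P_n)$-set, so $\tau(P_n)$ equals the number of integer solutions of $t_0 + t_1 + \cdots + t_m = n-m$ subject to $t_0, t_m \in \{0,1\}$ and $t_1, \ldots, t_{m-1} \in \{0,1,2\}$; equivalently $\tau(P_n) = [x^{n-m}]\,(1+x)^2(1+x+x^2)^{m-1}$.

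Finally I would evaluate this count in the three residue classes. For $n = 3k$ one has $m = k$ and $n - m = 2k$, which is exactly the maximum possible value of the sum, forcing every $t_i$ to its upper bound; the solution is unique and $\tau = 1$. For $n = 3k+2$ one has $m = k+1$ and the target $n - m = 2k+1$ falls one below the maximum, so exactly one coordinate is lowered by $1$; as each of the $m+1 = k+2$ coordinates can be the lowered one, $\tau = k+2 = 2 + \lfloor n/3\rfloor$. The case $n = 3k+1$, where $n - m = 2k$ lies two below the maximum, is the main obstacle: here I would pass to the deficit variables $u_i$ (the shortfall of $t_i$ from its upper bound) and count solutions of $\sum u_i = 2$ with $u_0, u_m \le 1$ and $u_1, \ldots, u_{m-1} \le 2$. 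By inclusion--exclusion, the unrestricted count $\binom{k+3}{2}$ needs correction only for the two forbidden configurations $u_0 = 2$ and $u_m = 2$, giving $\binom{k+3}{2} - 2 = \tfrac12(k^2+5k+2) = n + \tfrac12\lfloor n/3\rfloor(\lfloor n/3\rfloor - 1)$, as claimed. I expect the only delicate point to be the bookkeeping of which end-coordinates violate their tighter bound $\{0,1\}$; the other two cases are essentially forced.
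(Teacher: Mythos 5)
Your proof is correct, and it takes a genuinely different route from the paper's. The paper classifies the $\gamma(P_n)$-sets $\Gamma$ by the isomorphism type of the induced subgraph $\langle \Gamma \rangle$ (all $K_1$'s, or $(k-1)$ $K_1$'s plus one $K_2$ in the case $n \equiv 1 \pmod 3$) together with which end-vertices of the path lie in $\Gamma$, and then counts each class separately by somewhat informal ``slot'' arguments; you instead map every $\gamma(P_n)$-set bijectively to a bounded composition $(t_0,\ldots,t_m)$ of $n-m$ with $t_0,t_m\in\{0,1\}$ and interior parts in $\{0,1,2\}$, after which all three residue classes reduce to one uniform stars-and-bars/inclusion--exclusion count on the deficit variables. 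I checked the key steps: the equivalence of domination with $d_1\le 2$, $d_m\ge n-1$, $d_{i+1}-d_i\le 3$ is right; the telescoping identity and the reconstruction of $D$ from $(t_0,\ldots,t_m)$ are right; and the three counts $1$, $k+2$, and ${k+3 \choose 2}-2=\frac12(k^2+5k+2)$ all agree with the theorem (including the boundary cases $n=2,3,4$, where there are no interior coordinates or only one). What your approach buys: a single rigorous mechanism replacing three ad hoc case analyses, plus the explicit generating function $[x^{n-m}]\,(1+x)^2(1+x+x^2)^{m-1}$, which generalizes readily (e.g.\ to counting dominating sets of any prescribed size). What the paper's approach buys: its decomposition by induced structure and end-vertex membership is exactly the scaffolding reused in the subsequent propositions computing the vertexwise values $DV(v)$ on paths, so it does double duty there, whereas extracting $DV(v)$ from your encoding would require additionally tracking which compositions place a part at a given vertex --- doable, but extra work not needed for $\tau(P_n)$ alone.
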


\begin{proof}
First, let $n=3k$, where $k \ge 1$. Then $\gamma(P_n)=k$ and a
$\gamma(P_n)$-set $\Gamma$ comprises $k$ $K_1$'s. In this case,
each vertex in $\Gamma$ dominates three vertices, and no vertex of
$P_{n}$ is dominated by more than one vertex. Thus none of the
end-vertices of $P_n$ belongs to any $\Gamma$, which contains and
is fixed by the vertex $2$; hence $\tau(P_n)=1$.\\

Second, let $n=3k+1$, where $k \ge 1$. Here $\gamma(P_n)=k+1$; a
$\gamma(P_n)$-set $\Gamma$ is constituted in exactly one of the
following two ways: 1) $\Gamma$ comprises $(k-1)$ $K_1$'s and one
$K_2$; 2) $\Gamma$ comprises $(k+1)$ $K_1$'s.\\

\emph{Case 1) $\langle \Gamma \rangle \cong (k-1)K_1 \cup K_2$, where $k \ge
1$:} Note that $\Gamma$ is fixed by the placement of the single
$K_2$, and none of the end-vertices belong to any $\Gamma$, as
each component with cardinality $c$ in $\langle \Gamma \rangle$ dominates $c+2$
vertices. Initial vertex of $K_2$ may be placed in one of the $n
\equiv 2$ (mod $3$) slots. Thus $\tau=k$.\\

\emph{Case 2) $\langle \Gamma \rangle \cong (k+1)K_1$, where $k \ge 1$:} A
$\Gamma$ containing both end-vertices of the path is unique (no
vertex is doubly dominated). The number of $\Gamma$ containing
exactly one of the end-vertices (one doubly dominated vertex) is
$2{k \choose 1}=2k$. The number of $\Gamma$ containing none of the
end-vertices (two doubly dominated vertices) is ${k \choose 2}=
\frac{k(k-1)}{2}$. Thus
$\tau=1+2k+\frac{k(k-1)}{2}$.\\

Summing over the two disjoint cases, we get
$$\tau(P_n)=k+\left(1+2k+\frac{k(k-1)}{2} \right)=3k+1+ \frac{k(k-1)}{2}=
n+\frac{1}{2} \left\lfloor\frac{n}{3}\right\rfloor
\left(\left\lfloor \frac{n}{3}\right\rfloor-1 \right).$$

Finally, let $n=3k+2$, where $k \ge 0$. Here $\gamma(P_n)=k+1$,
and $\gamma(P_n)$-set $\Gamma$ comprises of ($k+1$) $K_1$'s. Note
that there's no $\Gamma$ containing both end-vertices of $P_n$.
The number of $\Gamma$ containing exactly one of the end-vertices
(no doubly dominated vertex) of the path is two.  The number of
$\Gamma$ containing neither of the end-vertices (one doubly
dominated vertex) is $k$. Summing the two disjoint cases, we have
$\tau(P_n)=2+k=2+ \lfloor \frac{n}{3}\rfloor$. \hfill
\end{proof}

For the domination value of a vertex on $P_n$, note that
$DV(v)=DV(n+1-v)$ for $1 \le v \le n$ as $P_n$ admits the obvious
automorphism carrying $v$ to $n+1-v$. More precisely, we have the
classification result which follows. First, as an immediate
consequence of Theorem \ref{theorem on paths}, we have the
following result.

\begin{corollary} \label{path on 3k}
Let $v \in V(P_{3k})$, where $k \ge 1$. Then
\begin{equation*}
DV(v)= \left\{
\begin{array}{ll}
0 & \mbox{ if } v \equiv 0,1  \mbox{ (mod 3)} \\
1 & \mbox{ if } v \equiv 2  \mbox{ (mod 3)} .
\end{array} \right.
\end{equation*}
\end{corollary}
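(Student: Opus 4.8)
The plan is to leverage Theorem~\ref{theorem on paths} directly, since for $n=3k$ the theorem tells us $\tau(P_{3k})=1$, so there is a \emph{unique} $\gamma(P_{3k})$-set $\Gamma_0$. This reduces the corollary to a single question: which vertices lie in $\Gamma_0$? Once $\Gamma_0$ is identified, the domination value of each vertex is either $0$ (if it is not in $\Gamma_0$) or $\tau(P_{3k})=1$ (if it is), matching the claimed dichotomy. So the entire proof hinges on pinning down $\Gamma_0$ explicitly.

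First I would recall from the proof of Theorem~\ref{theorem on paths} (Case $n=3k$) that a $\gamma(P_{3k})$-set consists of exactly $k$ isolated vertices ($k$ copies of $K_1$), each dominating three consecutive vertices, with no vertex doubly dominated. Since the $3k$ vertices are partitioned into $k$ blocks of three consecutive vertices, each block must contribute exactly one dominating vertex, and that vertex must be the \emph{center} of its block to dominate all three. The proof already notes that the end-vertices cannot belong to $\Gamma_0$ and that $\Gamma_0$ is fixed by containing vertex $2$. I would make this block structure explicit: the unique dominating set is $\Gamma_0=\{2,5,8,\ldots,3k-1\}=\{3j-1 : 1\le j\le k\}$, i.e.\ exactly the vertices $v$ with $v\equiv 2\pmod 3$.

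With $\Gamma_0$ identified, the verification is immediate. A vertex $v$ satisfies $v\equiv 2\pmod 3$ if and only if $v\in\Gamma_0$, in which case $DV(v)=1$ since $\Gamma_0$ is the only $\gamma$-set and it contains $v$. Otherwise $v\equiv 0$ or $v\equiv 1\pmod 3$, so $v\notin\Gamma_0$, and since $\Gamma_0$ is the only $\gamma(P_{3k})$-set, $v$ belongs to no $\gamma$-set at all, giving $DV(v)=0$. This exactly reproduces the two cases in the statement.

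The main obstacle, such as it is, is purely bookkeeping: confirming rigorously that the center-of-each-block set is forced rather than merely sufficient. I would argue this by a short counting/covering argument---since $k$ singletons must dominate all $3k$ vertices and each singleton dominates at most three vertices, every vertex is dominated exactly once, which forces the dominating vertices to be spaced exactly three apart with none at the ends, uniquely determining $\Gamma_0=\{2,5,\ldots,3k-1\}$. This is genuinely routine given the theorem, so I expect no real difficulty; the corollary is essentially a direct readout of the uniqueness established in Theorem~\ref{theorem on paths}.
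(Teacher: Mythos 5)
Your proposal is correct and matches the paper's approach: the paper states this corollary as an immediate consequence of Theorem~\ref{theorem on paths}, whose $n=3k$ case already establishes that the unique $\gamma(P_{3k})$-set is forced (no doubly dominated vertices, no end-vertices, fixed by containing vertex $2$), i.e.\ it is exactly $\{2,5,\ldots,3k-1\}$. You simply spell out this forcing argument and the resulting readout of $DV(v)$ explicitly, which is precisely the intended reasoning.
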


\begin{proposition}
Let $v \in V(P_{3k+1})$, where $k \ge 1$. Write $v=3q+r$, where
$q$ and $r$ are non-negative integers such that $0 \le r < 3$.
Then, noting $\tau(P_{3k+1})=\frac{1}{2}(k^2+5k+2)$, we have
\begin{equation*} DV(v)= \left\{
\begin{array}{ll}
\frac{1}{2}q(q+3) & \mbox{ if } v \equiv 0 \mbox{ (mod 3) } \ \\
(q+1)(k-q+1) & \mbox{ if } v \equiv 1 \mbox{ (mod 3) } \ \\
\frac{1}{2}(k-q)(k-q+3) & \mbox{ if } v \equiv 2 \mbox{ (mod 3) }.
\end{array} \right.
\end{equation*}
\end{proposition}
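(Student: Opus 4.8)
The plan is to compute $DV(v)$ by counting, for each vertex $v$, the $\gamma(P_{3k+1})$-sets containing it, exploiting the structural dichotomy already isolated in the proof of Theorem~\ref{theorem on paths}. Since $\gamma(P_{3k+1})=k+1$, every $\gamma$-set $\Gamma$ is either of \emph{Type 1}, with $\langle\Gamma\rangle\cong(k-1)K_1\cup K_2$ (exactly $k$ of these), or of \emph{Type 2}, with $\langle\Gamma\rangle\cong(k+1)K_1$ (exactly $\binom{k+2}{2}$ of these, consistent with $\tau(P_{3k+1})=k+\binom{k+2}{2}=\frac12(k^2+5k+2)$). I would compute the contribution of each type to $DV(v)$ separately and add them; the result will depend only on $v \bmod 3$ and on $q=\lfloor v/3\rfloor$.

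For Type 1, I would first pin down the $k$ sets explicitly. Writing out the gap sequence, once the unique adjacent pair is placed the remaining gaps must be maximal (their sum is forced), so for $t=1,\dots,k$ the $t$-th such set is
\[
T_t=\{3j-1 : 1\le j\le t\}\cup\{3j : t\le j\le k\},
\]
whose dominators lie at positions $\equiv 2 \pmod 3$ before the pair and $\equiv 0\pmod 3$ from the pair onward. Membership is then immediate: a vertex $v\equiv 1\pmod 3$ lies in no $T_t$; a vertex $v=3q$ lies in $T_t$ exactly when $t\le q$, contributing $q$; and a vertex $v=3q+2$ lies in $T_t$ exactly when $t\ge q+1$, contributing $k-q$.

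For Type 2, I would encode each all-$K_1$ set by its gap vector $(g_0,\dots,g_{k+1})$, where $g_0,g_{k+1}\in\{0,1\}$ and the $k$ interior gaps lie in $\{1,2\}$. Since $\sum_i g_i = (3k+1)-(k+1)=2k$ while the maximal profile $g_0=g_{k+1}=1,\ g_i=2$ sums to $2k+2$, each set corresponds bijectively to a choice of exactly two of the $k+2$ slots to reduce by one; reducing a slot shifts every later dominator left by one, so the $j$-th dominator sits at $d_j=3j-1-\bigl|\{\text{chosen slots}<j\}\bigr|$. Because the possible values $\{3j-3,3j-2,3j-1\}$ partition the integers, the vertex $v$ can only be the $d_{q+1}$-th dominator: $v=3q$ forces both chosen slots into $\{0,\dots,q\}$ ($\binom{q+1}{2}$ sets), $v=3q+1$ forces exactly one chosen slot $\le q$ ($(q+1)(k-q+1)$ sets), and $v=3q+2$ forces both chosen slots $\ge q+1$ ($\binom{k-q+1}{2}$ sets).

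Finally I would add the two contributions, obtaining $q+\binom{q+1}{2}=\frac12 q(q+3)$ for $v\equiv 0$, $\,0+(q+1)(k-q+1)$ for $v\equiv 1$, and $(k-q)+\binom{k-q+1}{2}=\frac12(k-q)(k-q+3)$ for $v\equiv 2$, which are exactly the claimed values. The main obstacle is establishing the two bijective parametrizations rigorously---in particular verifying that the reduction-slot encoding is a bijection onto the Type 2 $\gamma$-sets and that the forced-gap description of the Type 1 sets is complete; once these structural facts are secured, the membership counts reduce to routine case analysis modulo $3$.
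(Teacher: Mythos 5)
Your proof is correct, and it takes a genuinely different route from the paper's. Both arguments start from the same dichotomy (Type 1: $\langle\Gamma\rangle \cong (k-1)K_1 \cup K_2$; Type 2: $\langle\Gamma\rangle \cong (k+1)K_1$), but the paper computes each contribution by induction on $k$: it further splits Type 2 into three subcases according to how many end-vertices lie in $\Gamma$, establishes a formula for each subcase by tracking how $\gamma(P_{3k+1})$-sets extend to $\gamma(P_{3k+4})$-sets (with a nested induction inside the no-end-vertex subcase), and then sums five pieces. You instead parametrize the $\gamma$-sets directly: the forced-gap argument pins down the $k$ Type 1 sets explicitly as your $T_t$, and the deficiency-two gap-vector encoding identifies Type 2 sets with choices of two of the $k+2$ slots, after which every count is a single binomial coefficient determined by the observation that $v$ can only be the $(q+1)$-st dominator. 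I checked your counts against the paper's: your Type 1 values $(q,\,0,\,k-q)$ agree with the paper's $DV^1$, and your Type 2 values ${q+1 \choose 2}$, $(q+1)(k-q+1)$, ${k-q+1 \choose 2}$ equal the paper's $DV^{2,1}+DV^{2,2}+DV^{2,3}$ in each residue class, so the final sums match the claimed formulas. Your approach is shorter and self-contained --- no induction, no end-vertex subcases --- and it re-derives $\tau(P_{3k+1}) = k + {k+2 \choose 2} = \frac{1}{2}(k^2+5k+2)$ as a built-in consistency check. The ``main obstacle'' you flag at the end is in fact already closed by the argument you sketch: since each slot can be reduced by at most one (reducing an interior gap to $0$ would create a $K_2$, contradicting Type 2), exactly two distinct slots are reduced, and conversely any such choice yields a dominating set of size $k+1 = \gamma(P_{3k+1})$; the analogous sum-is-forced argument makes the Type 1 list complete. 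The paper's inductive route, by contrast, buys a uniform template that the author reuses for the $P_{3k+2}$ case, at the cost of considerably more bookkeeping.
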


\begin{proof}
Let $\Gamma$ be a $\gamma(P_{3k+1})$-set for $k \ge 1$. We
consider two cases.\\

\textit{Case 1) $\langle \Gamma \rangle \cong (k-1)K_1 \cup K_2$, where $k \ge
1$:} Denote by $DV^1(v)$ the number of such $\Gamma$'s containing
$v$. Noting $\tau=k$ in this case, we have
\begin{equation}\label{path 3k+1 case1}
DV^1(v)= \left\{
\begin{array}{ll}
q & \mbox{ if } v \equiv 0  \mbox{ (mod 3)} \ \\
0 & \mbox{ if } v \equiv 1 \mbox{ (mod 3)} \ \\
k-q & \mbox{ if } v \equiv 2 \mbox{ (mod 3)} .
\end{array} \right.
\end{equation}
We prove by induction on $k$. One can easily check (\ref{path 3k+1
case1}) for $k=1$. Assume that (\ref{path 3k+1 case1}) holds for
$G=P_{3k+1}$ and consider $G'=P_{3k+4}$. First, notice that each
$\Gamma$ of the $k$ $\gamma(P_{3k+1})$-sets of $G$ induces a
$\gamma(P_{3k+4})$-set $\Gamma'=\Gamma \cup \{3k+3\}$ of $G'$.
Additionally, $G'$ has the $\gamma(P_{3k+4})$-set $\Gamma^*$ that
contains and is determined by $\{3k+2, 3k+3\}$, which does not
come from any $\gamma(P_{3k+1})$-set of $G$. The presence of
$\Gamma^*$ implies that $DV^1_{G'}(v)=DV^1_G(v)+1$ for $v \equiv
2$ (mod 3), where $v \le 3k+1$. Clearly, $DV^1_{G'}(3k+2)=1$,
$DV^1_{G'}(3k+3)=k+1$, and $DV^1_{G'}(3k+4)=0$.\\

\textit{Case 2) $\langle \Gamma \rangle \cong (k+1)K_1$, where $k \ge 1$:}
Denote by $DV^2(v)$ the number of such $\Gamma$'s containing $v$.
First, suppose both end-vertices belong to the unique $\Gamma$ and
denote by $DV^{2,1}(v)$ the number of such $\Gamma$'s containing
$v$. Then we have
\begin{equation}\label{Case 3-1}
DV^{2,1}(v)= \left\{
\begin{array}{ll}
0 & \mbox{ if } v \equiv 0, 2  \mbox{ (mod 3)} \ \\
1 & \mbox{ if } v \equiv 1 \mbox{ (mod 3)} .
\end{array} \right.
\end{equation}

Second, suppose exactly one end-vertex belongs to each $\Gamma$;
denote by $DV^{2,2}(v)$ the number of such $\Gamma$'s containing
$v$. Then, noting $\tau=2k$ in this case, we have
\begin{equation}\label{Case 3-2}
DV^{2,2}(v)= \left\{
\begin{array}{ll}
q & \mbox{ if } v \equiv 0 \mbox{ (mod 3)} \ \\
k & \mbox{ if } v \equiv 1 \mbox{ (mod 3)} \ \\
k-q & \mbox{ if } v \equiv 2 \mbox{ (mod 3)} .
\end{array} \right.
\end{equation}
We prove by induction on $k$. One can easily check (\ref{Case
3-2}) for $k=1$. Assume that (\ref{Case 3-2}) holds for
$G=P_{3k+1}$ and consider $G'=P_{3k+4}$. First, notice that each
$\Gamma$ of the $k$ $\gamma(P_{3k+1})$-sets of $G$ containing the
left end-vertex $1$ induces a $\gamma(P_{3k+4})$-set
$\Gamma'=\Gamma \cup \{3k+3\}$ of $G'$. Second, each $\Gamma$ of
$k$ $\gamma(P_{3k+1})$-sets of $G$ containing the right end-vertex
$3k+1$ induces a $\gamma(P_{3k+4})$-set $\Gamma'=\Gamma \cup
\{3k+4\}$ of $G'$. Third, a $\gamma(P_{3k+1})$-set $\Gamma$ of $G$
containing $1$ and $3k+1$ (both left and right end-vertices of
$G$) induces a $\gamma(P_{3k+4})$-set $\Gamma^{*1}=\Gamma \cup
\{3k+3\}$ of $G'$ (making $3k+2$ the only doubly dominated vertex
in $G'$). Additionally, $\Gamma^{*2}=\{v \in V(P_{3k+1}) \mid v \equiv 2 \mbox{ (mod 3)}\} \cup \{3k+2, 3k+4\}$ is a
$\gamma(P_{3k+4})$-set for $G'$, which does not come from any
$\gamma(P_{3k+1})$-set of $G$. The presence of $\Gamma^{*1}$ and
$\Gamma^{*2}$ imply that
\begin{equation*}
DV^{2,2}_{G'}(v)= \left\{
\begin{array}{ll}
DV^{2,2}_G(v) & \mbox{ if } v \equiv 0 \mbox{ (mod 3)} \ \\
DV^{2,2}_G(v)+1 & \mbox{ if } v \equiv 1,2 \mbox{ (mod 3)}
\end{array} \right.
\end{equation*}
for $v \le 3k+1$.
Clearly, $DV^{2,2}_{G'}(3k+2)=1$, $DV^{2,2}_{G'}(3k+3)=k+1$, and $DV^{2,2}_{G'}(3k+4)=k+1$.\\

Third, suppose no end-vertex belongs to $\Gamma$; denote by
$DV^{2,3}(v)$ the number of such $\Gamma$'s containing $v$. Then,
noting $\tau= {k\choose 2}$ in this case and setting ${a\choose
b}=0$ when $a<b$, we have
\begin{equation} \label{Case 3-3}
DV^{2,3}(v)= \left\{
\begin{array}{ll}
\frac{1}{2}(q-1)q & \mbox{ if } v \equiv 0 \mbox{ (mod 3)} \ \\
q(k-q) & \mbox{ if } v \equiv 1 \mbox{ (mod 3)} \ \\
\frac{1}{2}(k-q-1)(k-q) & \mbox{ if } v \equiv 2\mbox{ (mod 3)} .
\end{array} \right.
\end{equation}
Again, we prove by induction on $k$. Since $DV^{2,3}(v)=0$ for
each $v \in V(P_{4})$, we consider $k \ge 2$. One can easily check
(\ref{Case 3-3}) for the base, $k=2$. Assume that (\ref{Case 3-3})
holds for $G=P_{3k+1}$ and consider $G'=P_{3k+4}$, where $k \ge
2$. First, notice that each $\Gamma$ of the $k\choose 2$
$\gamma(P_{3k+1})$-sets of $G$ containing neither end-vertices of
$G$ induces a $\gamma(P_{3k+4})$-set $\Gamma'=\Gamma \cup
\{3k+3\}$ of $G'$. Additionally, each $\Gamma_r$ of the $k$
$\gamma(P_{3k+1})$-sets of $G$ containing the right-end vertex
$3k+1$ of $G$ induces a $\gamma(P_{3k+4})$-set $\Gamma_r'=\Gamma_r
\cup \{3k+3\}$ of $G'$ (making $3k+2$ one of the two
doubly-dominated vertices in $G'$): If we denote by $DV^r_G(v)$
the number of such $\Gamma_r$'s containing $v$ in $G$, then one
can readily check
\begin{equation*}
DV^r_G(v)= \left\{
\begin{array}{ll}
0 & \mbox{ if } v \equiv 0 \mbox{ (mod 3)} \ \\
q & \mbox{ if } v \equiv 1 \mbox{ (mod 3)} \ \\
k-q & \mbox{ if } v \equiv 2\mbox{ (mod 3)} ,
\end{array} \right.
\end{equation*}
again by induction on $k$. Thus, the presence of $\Gamma'_r$
implies $DV_{G'}^{2,3}(v)=DV_G^{2,3}(v)+DV^r_G(v)$ for $v \le
3k+1$. Clearly, $DV^{2,3}_{G'}(3k+2)=0=DV^{2,3}_{G'}(3k+4)$
and $DV^{2,3}_{G'}(3k+3)= {k\choose 2}+k=\frac{1}{2}k(k+1)$.\\

Summing over the three disjoint cases (\ref{Case 3-1}), (\ref{Case
3-2}), and (\ref{Case 3-3}) for $\langle \Gamma \rangle \cong (k+1)K_1$, we have
\begin{equation}\label{path 3k+1 case3}
DV^2(v)= \left\{
\begin{array}{ll}
q + \frac{1}{2}(q-1)q& \mbox{ if } v \equiv 0 \mbox{ (mod 3)} \ \\
1+k+q(k-q) & \mbox{ if } v \equiv 1 \mbox{ (mod 3)} \ \\
k-q+ \frac{1}{2}(k-q-1)(k-q)& \mbox{ if } v \equiv 2 \mbox{ (mod
3)} .
\end{array} \right.
\end{equation}
Now, by summing over (\ref{path 3k+1 case1}) and (\ref{path 3k+1
case3}), i.e., $DV(v)=DV^1(v)+DV^2(v)$, we obtain the formula
claimed in this proposition.~\hfill
\end{proof}

\begin{proposition}
Let $v \in V(P_{3k+2})$, where $k \ge 0$. Write $v=3q+r$, where
$q$ and $r$ are non-negative integers such that $0 \le r < 3$.
Then, noting $\tau(P_{3k+2})=k+2$, we have
\begin{equation*}
DV(v)= \left\{
\begin{array}{ll}
0 & \mbox{ if } v \equiv 0 \mbox{ (mod 3) } \ \\
1+q & \mbox{ if } v \equiv 1 \mbox{ (mod 3) } \ \\
k+1-q & \mbox{ if } v \equiv 2 \mbox{ (mod 3) } .
\end{array} \right.
\end{equation*}
\end{proposition}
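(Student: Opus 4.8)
The plan is to determine all $k+2$ elements of $DM(P_{3k+2})$ explicitly and then, for each residue class of $v$ modulo $3$, simply count how many of them contain $v$. From the proof of Theorem~\ref{theorem on paths} (the case $n=3k+2$), every $\gamma(P_{3k+2})$-set consists of $k+1$ isolated vertices ($K_1$'s), no such set contains both end-vertices, exactly two contain precisely one end-vertex, and exactly $k$ contain neither end-vertex. I would first pin down the two sets of the second kind. If a $\gamma$-set contains the left end-vertex $1$ and no vertex is doubly dominated, then the closed neighborhoods partition $V(P_{3k+2})$; since $1$ dominates $\{1,2\}$, the remaining $k$ singletons are forced to sit at $4,7,\ldots,3k+1$, giving the set $L=\{3j+1 : 0 \le j \le k\}$ of all vertices congruent to $1 \pmod 3$. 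By the automorphism $v \mapsto 3k+3-v$, the set containing the right end-vertex is $R=\{3j+2 : 0 \le j \le k\}$, all vertices congruent to $2 \pmod 3$.

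Next I would describe the $k$ sets containing neither end-vertex. Such a set must contain $2$ (the only neighbor of $1$) and $3k+1$ (the only neighbor of $3k+2$). Writing the $k+1$ singletons in increasing order as $p_1=2<p_2<\cdots<p_{k+1}=3k+1$, each consecutive gap $p_{t+1}-p_t$ must equal $2$ or $3$ (a larger gap would leave an undominated vertex), and since such a set has exactly one doubly dominated vertex, exactly one gap equals $2$. As the gaps sum to $(3k+1)-2=3k-1$, a short computation forces precisely one gap of size $2$. Letting that gap occur after the $i$-th singleton yields the family
\[
B_i=\{3j+2 : 0 \le j \le i-1\} \cup \{3j+1 : i \le j \le k\}, \qquad i=1,\ldots,k,
\]
whose unique doubly dominated vertex is $3i$; these $k$ sets are distinct and, by the count above, exhaust the third kind.

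Finally I would carry out the membership count over $\{L,R,B_1,\ldots,B_k\}$, splitting on $v \bmod 3$. A vertex $v=3q$ lies in none of the sets, since each set consists only of vertices congruent to $1$ or $2$ modulo $3$, giving $DV(v)=0$. A vertex $v=3q+1$ lies in $L$ and in exactly those $B_i$ with $i \le q$, hence $DV(v)=1+q$. A vertex $v=3q+2$ lies in $R$ and in exactly those $B_i$ with $i \ge q+1$, hence $DV(v)=1+(k-q)=k+1-q$. This matches the three claimed formulas. The degenerate base case $k=0$ (where $P_2=K_2$, there are no $B_i$, and $L=\{1\}$, $R=\{2\}$) is consistent with these formulas as well.

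The main obstacle is the second paragraph: proving that the sets containing neither end-vertex are precisely the $B_i$. Everything else is bookkeeping, but the gap analysis—consecutive singletons differ by $2$ or $3$, with exactly one difference of $2$—is what pins the structure down. Once that is in place, the residue-by-residue counting in the last paragraph is routine, and the small base cases can be checked directly against the examples for $P_5$ given earlier.
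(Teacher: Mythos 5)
Your proof is correct, but it takes a genuinely different route from the paper's. Both arguments start from the same structural facts drawn from the proof of Theorem~\ref{theorem on paths} (every $\gamma(P_{3k+2})$-set induces $(k+1)K_1$, never contains both end-vertices, and splits into two sets containing exactly one end-vertex plus $k$ sets containing neither), and both decompose $DV(v)$ accordingly. The difference is in how the second case is handled: the paper computes the contribution of the end-vertex-free sets by induction on $k$, comparing $P_{3k+2}$ with $P_{3k+5}$, tracking how each old $\gamma$-set extends and which single new set appears, whereas you characterize those sets explicitly in closed form as $B_1,\dots,B_k$ via the gap analysis (consecutive selected vertices differ by $2$ or $3$, and the gap sum $3k-1$ forces exactly one gap of size $2$), and then count memberships directly, residue class by residue class. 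Your enumeration buys more: it exhibits $DM(P_{3k+2})$ itself, turns all three formulas into one-line counts, and needs no induction; the paper's inductive scheme is lighter on structural analysis here and, more importantly, is the same technique it needs for the harder $P_{3k+1}$ proposition, where an explicit enumeration of all $\gamma$-sets would be considerably messier. One small streamlining of your argument: the appeal to ``exactly one doubly dominated vertex'' is not needed as a hypothesis---once each gap is bounded by $3$ (else some vertex between two consecutive selected vertices is undominated), the identity $\sum_t (3-g_t) = (3k)-(3k-1) = 1$ already forces exactly one gap of size $2$, and the unique doubly dominated vertex $3i$ falls out as a conclusion rather than an input.
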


\begin{proof}
Let $\Gamma$ be a $\gamma(P_{3k+2})$-set for $k \ge 0$. Then $\langle
\Gamma \rangle \cong (k+1) K_1$. Note that no $\Gamma$ contains both
end-vertices of $P_{3k+2}$.\\

First, suppose $\Gamma$ contains
exactly one end-vertex, and denote by $DV'(v)$ the number of such
$\Gamma$'s containing $v$. Noting $\tau=2$ in this case, for $v
\in V(P_{3k+2})$, we have
\begin{equation}\label{path on 3k+2 case1}
DV'(v)= \left\{
\begin{array}{ll}
0 & \mbox{ if } v \equiv 0 \mbox{ (mod 3) } \ \\
1 & \mbox{ if } v \equiv 1,2 \mbox{ (mod 3) }.
\end{array} \right.
\end{equation}

Next, suppose $\Gamma$ contains no end-vertices (thus $k \ge 1$),
and denote by $DV''(v)$ the number of such $\Gamma$'s containing
$v$. Noting $\tau=k$ in this case, we have
\begin{equation}\label{path on 3k+2 case2}
DV''(v)= \left\{
\begin{array}{ll}
0 & \mbox{ if } v \equiv 0 \mbox{ (mod 3) } \ \\
q & \mbox{ if } v \equiv 1 \mbox{ (mod 3) } \ \\
k-q & \mbox{ if } v \equiv 2 \mbox{ (mod 3) } .
\end{array} \right.
\end{equation}
We prove by induction on $k$. One can easily check (\ref{path on
3k+2 case2}) for the base, $k=1$. Assume that (\ref{path on 3k+2
case2}) holds for $G=P_{3k+2}$ and consider $G'=P_{3k+5}$. First,
notice that each $\Gamma$ of the $k$ $\gamma(P_{3k+2})$-sets
containing neither end-vertex of $G$ induces a
$\gamma(P_{3k+5})$-set $\Gamma'=\Gamma \cup \{3k+4\}$.
Additionally, the only $\gamma(P_{3k+2})$-set $\Gamma$ of $G$
containing the right-end vertex $3k+2$ of $G$ induces a
$\gamma(P_{3k+5})$-set $\Gamma^\star=\Gamma \cup \{3k+4\}$ of $G'$
(making $3k+3$ the only doubly-dominated vertex). The presence of
$\Gamma^\star$ implies that
\begin{equation*}
DV_{G'}''(v)= \left\{
\begin{array}{ll}
DV_G''(v) & \mbox{ if } v \equiv 0, 1 \mbox{ (mod 3) } \\
DV_G''(v)+1 & \mbox{ if } v \equiv 2 \mbox{ (mod 3) }
\end{array} \right.
\end{equation*}
for $v \le 3k+2$. Clearly, $DV_{G'}''(3k+3)=0=DV_{G'}''(3k+5)$ and
$DV_{G'}''(3k+4)=k+1$.\\

Now, by summing over the two disjoint cases (\ref{path on 3k+2
case1}) and (\ref{path on 3k+2 case2}), i.e.,
$DV(v)=DV'(v)+DV''(v)$, we obtain the formula claimed in this
proposition. \hfill
\end{proof}

\textit{Acknowledgement.} The author thanks Cong X. Kang for
suggesting the concept of \emph{domination value} and his valuable
comments and suggestions. The author also thanks the referee for a couple of helpful comments.

\nocite{*}
\bibliographystyle{amsplain}
\bibliography{}

\end{document}